\newtheorem{theorem}{Theorem}
\newtheorem{proposition}[theorem]{Proposition}
\newtheorem{definition}[theorem]{Definition}
\newtheorem{example}[theorem]{Example}
\title{Total loss estimation using copula-based regression models\thanks{The second author gratefully acknowledges the support of the TUM Graduate School's International School of Applied Mathematics as well as of Allianz Deutschland AG. The numerical computations were performed on a Linux cluster supported by DFG grant INST 95/919-1 FUGG.}}
\author{Nicole Kr\"amer $\quad$ Eike C. Brechmann $\quad$ Daniel Silvestrini $\quad$ Claudia Czado\\
Department of Mathematical Statistics\\
Technische Universit\"at M\"unchen}
\begin{document}
\maketitle
\begin{abstract}
We present a joint copula-based model for insurance claims and sizes. It uses bivariate copulae to accommodate for the dependence between these quantities. We derive the general distribution of the policy loss without the restrictive assumption of independence. We illustrate that this distribution tends to be skewed and multi-modal, and that an independence assumption can lead to substantial bias in the estimation of the policy loss. Further, we extend our framework to regression models by combining marginal generalized linear models with a copula. We show that this approach leads to a flexible class of models, and that the parameters can be estimated efficiently using maximum-likelihood. We propose a test procedure for the selection of the optimal copula family. The usefulness of our approach is illustrated in a simulation study and in an analysis of car insurance policies.
\end{abstract}

{\bf Keywords}: dependence modeling; generalized linear model; number of claims; claim size; policy loss


\section{Introduction}
Estimating the total loss of an insurance portfolio is crucial for many actuarial decisions, e.g. for pricing of insurance contracts and for the calculation of premiums. A very common approach, based on the compound model by \citet{lun03}, models the average claim size and the number of claims independently, and then defines the loss as the product of these two quantities. However, the assumption of independence can be too restrictive and  lead to a systematic  over- or under-estimation of the policy loss. Evidently, this effects the accuracy of the  estimation of the portfolio loss.

We therefore propose a joint model that explicitly allows a dependency between average claim sizes and number of claims. This is achieved by combining marginal distributions for claim frequency and severity with families of bivariate copulae. A main contribution of this paper is the derivation of the distribution of the loss of an insurance policy. We illustrate that the distribution is often very skewed, and that -- depending on the model parameters -- the distribution is multi-modal. Based on this distribution, we can estimate the expected policy loss and its quantiles. We show that the distribution, and in particular its mean, depends strongly on the degree of dependence. This underpins the usefulness of our copula-based model.

Dependence modeling using copulae has become very popular the last years (see the standard reference books by \cite{har97} and \cite{Nelsen2006}) and was introduced to actuarial mathematics by \cite{FreesValdez98}.
Since then, copulae have been used, e.g., for the modeling of bivariate loss distributions by \cite{Klugman1999} and of dependencies between loss triangles by \cite{DeJong2009}, as well as for risk management (see \cite{McNeilFreyEmbrechts2005}).

It is common practice to model average claim sizes and number of claims in terms of a set of covariates as e.g. gender or age (see, e.g., \cite{Haberman1996} for an overview). Typically, claim frequency and severity are however modeled separately under the independence assumption of \cite{lun03}. \cite{gschl07} therefore included the number of claims as a covariate
into the model for average claim size. To allow for more flexibility and generality in the type of dependence, we extend our copula-based model to regression models by combining generalized linear models for the two marginal regression models with copula families. This is an extension of a recent approach by \cite{czakas10} and \cite{DeLeonWu2011} who only consider a Gauss copula based on work by \cite{Song00,Song2007} and \cite{Song2009}.

In our general copula-based regression approach, the model parameters  can be estimated efficiently using maximum-likelihood techniques. Further, we provide asymptotic confidence intervals that allow us to quantify the uncertainty of our estimates. For the selection of the copula family, we propose the likelihood-ratio test by \cite{vuong89}.

In an extensive simulation study, we show that the incorporation of a copula allows a more precise estimation of the individual policy losses, which in turn leads to a more reliable estimation of the total loss. These results are confirmed in a case study on car insurance policies.

\section{Bivariate copulae for continuous-discrete data} %
\subsection{Background: bivariate copulae}
 A bivariate copula $C: [0,1] \times [0,1]\rightarrow[0,1]$  is a bivariate cumulative distribution function on $[0,1] \times [0,1] $ with uniformly distributed margins. The importance of copulae is underpinned by Sklar's Theorem \citeyearpar{skl57}. In the bivariate case, it states that for every joint distribution function $F_{X,Y}$ of a bivariate random variable $(X,Y)$ with  univariate marginal distribution functions $F_{X}$ and $F_Y$, there exists a bivariate copula $C$ such that
\begin{eqnarray}
F_{X,Y}(x,y)&=&C(F_X(x),F_Y(y)).
\label{eq:Copula}
\end{eqnarray}
If $X$ and $Y$ are continuous random variables, the copula $C$ is unique. Conversely, if $C$ is a copula, Equation \eqref{eq:Copula} defines a bivariate distribution with marginal distribution functions $F_X$ and $F_Y$.  This allows us to model the marginal distributions and the joint dependence separately, as we can define the copula $C$ independently of the marginal distributions.

Copulae are invariant under monotone transformations of the marginal distributions. Therefore, instead of the  correlation coefficient -- which measures linear associations -- monotone association measures are used. A very common choice is Kendall's $\tau$,
\begin{eqnarray*}
\tau&:=&4 \int_{[0,1]^2} C(u,v)dC(u,v) -1 \in [-1,1] \,.
\end{eqnarray*}
In this paper, we study copula-based models for a pair of continuous-discrete random variables. We denote the continuous random variable by $X$, and the discrete random variable by $Y$. We assume that $Y$ takes values in $1,2,\ldots$. Their joint distribution is defined by a parametric copula  $C(\cdot,\cdot|\theta)$ that depends on a parameter $\theta$, i.e. the joint distribution is given by
\begin{eqnarray*}
F_{X,Y|\theta}(x,y)&=& C\left(F_X(x),F_Y(y) |\theta\right).
\end{eqnarray*}
We focus on four families of parametric bivariate copulae, namely the Clayton, Gumbel, Frank and Gauss copulae. Each family depends on a copula parameter $\theta$.  These parameters can be expressed in terms of Kendall's $\tau$. The definitions of the copula families and their relationship to Kendall's $\tau$ are provided in   \ref{app:copulae}. We note that the Clayton copula is only defined for positive values of Kendall's $\tau$, and that the Gumbel copula is only defined for non-negative values of Kendall's $\tau$. Via a rotation, it is however possible to extend these copula families to negative values of $\tau$. An overview on bivariate copula families and their properties, in particular their different tail behavior, can be found e.g. in \cite{BreSch12}.

For sampling, estimation and prediction, we need  the joint density/probability mass function of $X$ and $Y$ that is defined as
\begin{eqnarray}
\label{eq:density_mass}
f_{X,Y}(x,y)&:=&\frac{\partial}{\partial x}P(X\leq x,Y=y)\,.
\end{eqnarray}
In the remainder of this paper, we will refer to $f$ as the joint density function of $X$ and $Y$.

We now derive formulas for the joint density of $X$ and $Y$ in terms of the copula $C(\cdot,\cdot|\theta)$. We denote by
\begin{eqnarray}
\label{eq:parder}
D_1(u,v|\theta)&:=& \frac{\partial}{\partial u} C(u,v|\theta)
\end{eqnarray}
for $u,v \in ]0,1[$ the partial derivative of the copula with respect to the first variable. Note that this is the conditional density of the random variable $V:=F_Y(Y)$ given $U:=F_X(X)$ \citep{har97}. Table \ref{tab:der_copula} in \ref{app:copulae} shows the partial derivative \eqref{eq:parder} of the Clayton, Gumbel, Frank and Gauss copula (see e.g. \cite{aas09} or \cite{scheps12} for more details).
\begin{proposition}[Density function]\label{pro:densitymass}
The joint density function $f_{X,Y}$  of a continuous random variable $X$ and a discrete random variable $Y$
is given by
\begin{eqnarray}
f_{X,Y}(x,y|\theta)&=&f_X(x) \left(D_1(F_X(x),F_Y(y)|\theta)-D_1(F_X(x),F_Y(y-1)|\theta)\right)\,.
\label{eq:Gen_PDF}
\end{eqnarray}
\end{proposition}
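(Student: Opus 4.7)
My plan is to start from the definition \eqref{eq:density_mass}, reduce the event $\{Y=y\}$ to a difference of $\{Y\le y\}$ events since $Y$ is integer-valued, apply Sklar's representation \eqref{eq:Copula} to the two resulting joint CDFs, and finally differentiate in $x$ via the chain rule, invoking \eqref{eq:parder}.

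Concretely, first I would write
\begin{eqnarray*}
P(X \le x, Y = y) &=& P(X\le x, Y\le y) - P(X\le x, Y\le y-1),
\end{eqnarray*}
which is valid because $Y$ takes values in $\{1,2,\ldots\}$, so $\{Y=y\}=\{Y\le y\}\setminus\{Y\le y-1\}$ with the second event contained in the first. Next, I would apply Sklar's theorem from \eqref{eq:Copula} to each term, giving
\begin{eqnarray*}
P(X\le x, Y=y) &=& C(F_X(x), F_Y(y)\mid\theta) - C(F_X(x), F_Y(y-1)\mid\theta).
\end{eqnarray*}

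Then I would differentiate both sides with respect to $x$. Since $X$ is continuous with density $f_X$, $F_X$ is differentiable almost everywhere with $F_X'(x)=f_X(x)$. Applying the chain rule term by term and using the notation \eqref{eq:parder} for the partial derivative of $C$ with respect to its first argument yields
\begin{eqnarray*}
\frac{\partial}{\partial x} C(F_X(x), F_Y(y)\mid\theta) &=& D_1(F_X(x), F_Y(y)\mid\theta)\, f_X(x),
\end{eqnarray*}
and similarly for the term with $y-1$. Subtracting gives exactly \eqref{eq:Gen_PDF}.

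The only non-routine step is the justification of applying Sklar's representation in the continuous-discrete setting: when $Y$ is discrete, the copula in Sklar's theorem is not unique on all of $[0,1]^2$, only on the closure of the range of $(F_X,F_Y)$. However, for our purposes it is enough that \emph{some} copula of the assumed parametric family has been declared to define the joint distribution via $F_{X,Y|\theta}(x,y)=C(F_X(x),F_Y(y)\mid\theta)$, as stated in the paper just before the proposition; consequently the identity above is a definitional one, and the argument goes through without further subtlety. A minor regularity point is that we need $C(\cdot,v\mid\theta)$ to be differentiable in its first argument and $F_X$ to be differentiable in $x$; both hold for the copula families considered (Clayton, Gumbel, Frank, Gauss) and for any absolutely continuous marginal $F_X$, which is the setting of the paper.
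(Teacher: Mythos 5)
Your proposal is correct and follows essentially the same route as the paper's proof: decompose $\{Y=y\}$ as a difference of cumulative events, substitute the copula representation, and differentiate in $x$ via the chain rule. The additional remarks on the non-uniqueness of the copula for discrete margins and the regularity needed for differentiation are sound clarifications but do not change the argument.
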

\begin{proof}
By definition
\begin{eqnarray*}
\frac{\partial}{\partial x}P(X\leq x,Y=y)
				&=&\frac{\partial}{\partial x}P(X\leq x,Y\leq y)-\frac{\partial}{\partial x}P(X\leq x,Y\leq y-1)\\
				&=&\frac{\partial}{\partial x}C(F_X(x),F_Y(y)|\theta)-\frac{\partial}{\partial x}C(F_X(x),F_Y(y-1)|\theta)\\
				&=&f_X(x)\left(D_1(F_X(x),F_Y(y),\theta)-D_1(F_X(x),F_Y(y-1)|\theta)\right),
\end{eqnarray*}
which proves the statement.
\end{proof}
\subsection{Marginal distributions} %
The framework presented in the preceding subsection holds for general pairs of continuous-discrete random variables. In this paper, we focus on joint models for a positive average claim size $X$ and a positive number of claims $Y$. We  model the average claim size $X$  via a Gamma distribution
\begin{eqnarray}
f_X(x|\mu,\delta)&=&\frac{1}{x\Gamma\left(\frac{1}{\delta}\right)}\left(\frac{x}{\mu\delta}\right)^\frac{1}{\delta} \exp\left(-\frac{ x}{\mu\delta}\right)\quad\mbox{for }x>0\,,
\label{eq:Gamma_Density}
\end{eqnarray}
with mean parameter $\mu>0$ and dispersion parameter $\delta>0$. The number of claims $Y$ is a positive count variable, and is modeled as a zero-truncated Poisson (ZTP) distributed variable with parameter $\lambda>0$,
\begin{eqnarray*}
f_Y(y|\lambda)&=\frac{\lambda^y}{y!\left(1- \exp(-\lambda)\right)}\exp(-\lambda)\quad\mbox{for }y=1,2,\ldots.
\end{eqnarray*}
The generality of our approach easily allows to use other appropriate distributions such as the log-normal for claim severity or the (zero-truncated) Negative Binomial for claim frequency. The models and results presented below can be extended accordingly.
\subsection{Joint copula model for average claim sizes and number of claims} %
Combining the marginal distributions and the copula approach, we obtain the following general model.
\begin{definition}[Joint copula model for average claim sizes and number of claims]\label{def:jointmodel}
The copula-based Gamma and zero-truncated Poisson model for positive average claim sizes $X$ and positive number of claims $Y$ is defined by the joint density function
\begin{equation}
\begin{split}
f_{X,Y}&(x,y|\mu,\delta,\lambda,\theta)\\
& = f_X(x|\mu,\delta)  \left(D_1(F_X(x|\mu,\delta),F_Y(y|\lambda)|\theta)-D_1(F_X(x|\mu,\delta),F_Y(y-1|\lambda)|\theta)\right)\,,
\end{split}
\label{eq:jointmodel}
\end{equation}
for $x>0$ and $y=1,2,\ldots$.
\end{definition}

The model depends on four parameters: the parameters  $\mu,\delta$ (Gamma) and $\lambda$ (ZTP) for the marginal distributions, and the copula parameter $\theta$. Table \ref{tab:parameters} displays the parameters and their relationships to the joint distribution.

\begin{table}[t]
\begin{center}
\begin{tabular}{rccc}
\hline
&average &&\\
& claim size $X$ & number of claims $Y$ & copula family \\
\hline
distribution&Gamma&zero-truncated Poisson& Gauss, Clayton\\
&&& Gumbel, Frank\\
parameter(s) &$\mu>0,\,\delta>0$& $\lambda >0$& $\theta \in \Theta$\\
expectation & $E(X)=\mu$ & $E(Y)=\frac{\lambda}{1- e^{-\lambda}}$& --- \\
variance & $Var(X)=\mu^2 \delta$&$Var(Y)=\frac{\lambda(1-e^{-\lambda}(\lambda+1))}{(1- e^{-\lambda})^2}$ & ---
\end{tabular}
\end{center}
\caption{Model parameters of the joint distribution for average claim sizes $X$ and number of claims $Y$. The definition of the copula families is provided  in \ref{app:copulae}. }
\label{tab:parameters}
\end{table}

We now illustrate the influence of the copula parameters and families on the conditional distribution of $Y$. Therefore, we use the following result.
 \begin{proposition}\label{pro:conditional}
The conditional distribution $Y|X=x$ of the number of claims given an average claim size of $x$ under the copula-based model of Definition \ref{def:jointmodel} is given by
\begin{equation}
\begin{split}
P\left(Y=y|X=x,\mu,\delta,\lambda,\theta\right)=\ & D_1(F_X(x|\mu,\delta),F_Y(y|\lambda)|\theta)\\
& -D_1(F_X(x|\mu,\delta),F_Y(y-1|\lambda)|\theta)\,.
\end{split}
\label{eq:cond_dist}
\end{equation}
\end{proposition}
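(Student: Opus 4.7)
The plan is to obtain the conditional probability mass function as the ratio of the joint density from Proposition \ref{pro:densitymass} to the marginal density of $X$. Concretely, for a mixed continuous--discrete pair $(X,Y)$, the conditional pmf of $Y$ given $X=x$ is
\begin{equation*}
P(Y=y\mid X=x) \;=\; \frac{f_{X,Y}(x,y)}{f_X(x)},
\end{equation*}
whenever $f_X(x)>0$. Substituting the expression from Proposition \ref{pro:densitymass} into the numerator, the factor $f_X(x|\mu,\delta)$ cancels, and what remains is exactly the right-hand side of \eqref{eq:cond_dist}.

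The only subtlety is to confirm that the $f_X$ appearing in \eqref{eq:Gen_PDF} really is the marginal density of $X$ under the joint copula model; once this is established, the cancellation is immediate. To verify it, I would sum $f_{X,Y}(x,y\mid \theta)$ over $y=1,2,\ldots$ and observe that the sum telescopes:
\begin{equation*}
\sum_{y=1}^{\infty}\bigl(D_1(F_X(x),F_Y(y)\mid\theta)-D_1(F_X(x),F_Y(y-1)\mid\theta)\bigr)
=\lim_{y\to\infty}D_1(F_X(x),F_Y(y)\mid\theta)-D_1(F_X(x),0\mid\theta).
\end{equation*}
Using the boundary properties of the copula, $C(u,1\mid\theta)=u$ and $C(u,0\mid\theta)=0$, one gets $D_1(u,1\mid\theta)=1$ and $D_1(u,0\mid\theta)=0$, so the sum equals $1$ and hence $\sum_y f_{X,Y}(x,y\mid\theta)=f_X(x\mid\mu,\delta)$.

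With the marginal identified, the proof reduces to a single line: divide \eqref{eq:Gen_PDF} by $f_X(x\mid\mu,\delta)$. There is no real obstacle here; the work has already been done in Proposition \ref{pro:densitymass}, and Proposition \ref{pro:conditional} is essentially a rewriting of that result in conditional form. If desired, one can alternatively argue directly from Sklar's representation by differentiating $P(X\le x, Y\le y) = C(F_X(x),F_Y(y)\mid\theta)$ in $x$ and taking differences in $y$, then dividing by $f_X(x)$, which produces the same formula without invoking Proposition \ref{pro:densitymass} explicitly.
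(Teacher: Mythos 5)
Your proof is correct and follows the same route as the paper: divide the joint density from Proposition \ref{pro:densitymass} by $f_X(x)$ and cancel. The telescoping-sum verification that $f_X$ is indeed the marginal of $X$ is a nice additional check that the paper omits, but it does not change the argument.
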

\begin{proof}
This result follows from Proposition \ref{pro:densitymass}, as by definition for two random variables $X$ and $Y$
\begin{equation*}
P(Y=y|X=x)= \frac{f_{X,Y}(x,y)}{f_X(x)}\,.
\end{equation*}
\end{proof}
\begin{example}\label{ex:parameters}
We consider a group of policy holders with an expected number of claims of $\lambda=2.5$. The average claim size is set to $\mu=1000$ Euro, and we assume that the standard deviation of the average claim size equals $300$ Euro, which leads to a dispersion parameter of
\begin{equation*}
\delta= \frac{300^2}{1000^2}=0.09\,.
\end{equation*}
We condition on an average claim size of $x=1200$ Euro.
\end{example}

\begin{figure}[t]
\begin{center}
\includegraphics[width=7.5cm]{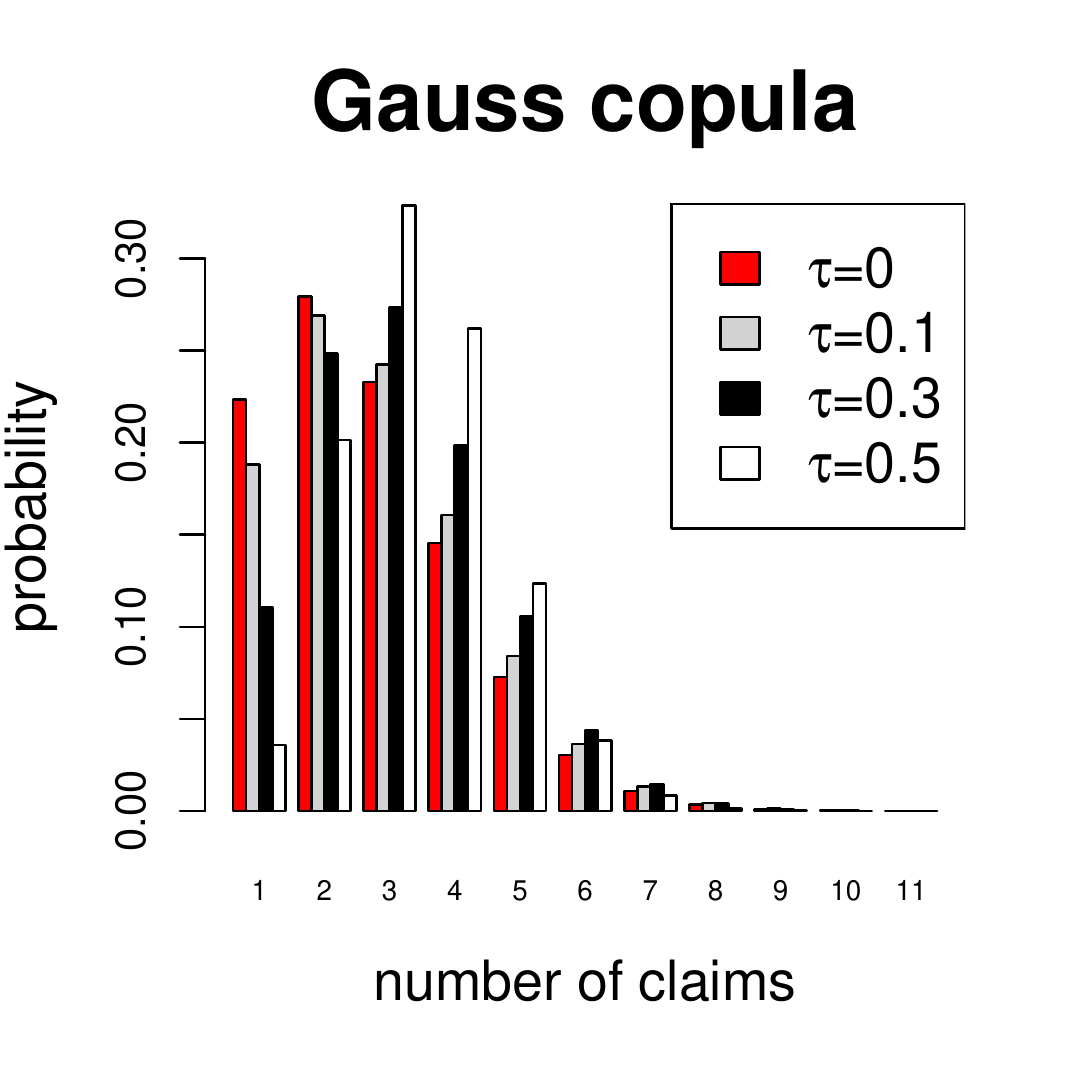}\includegraphics[width=7.5cm]{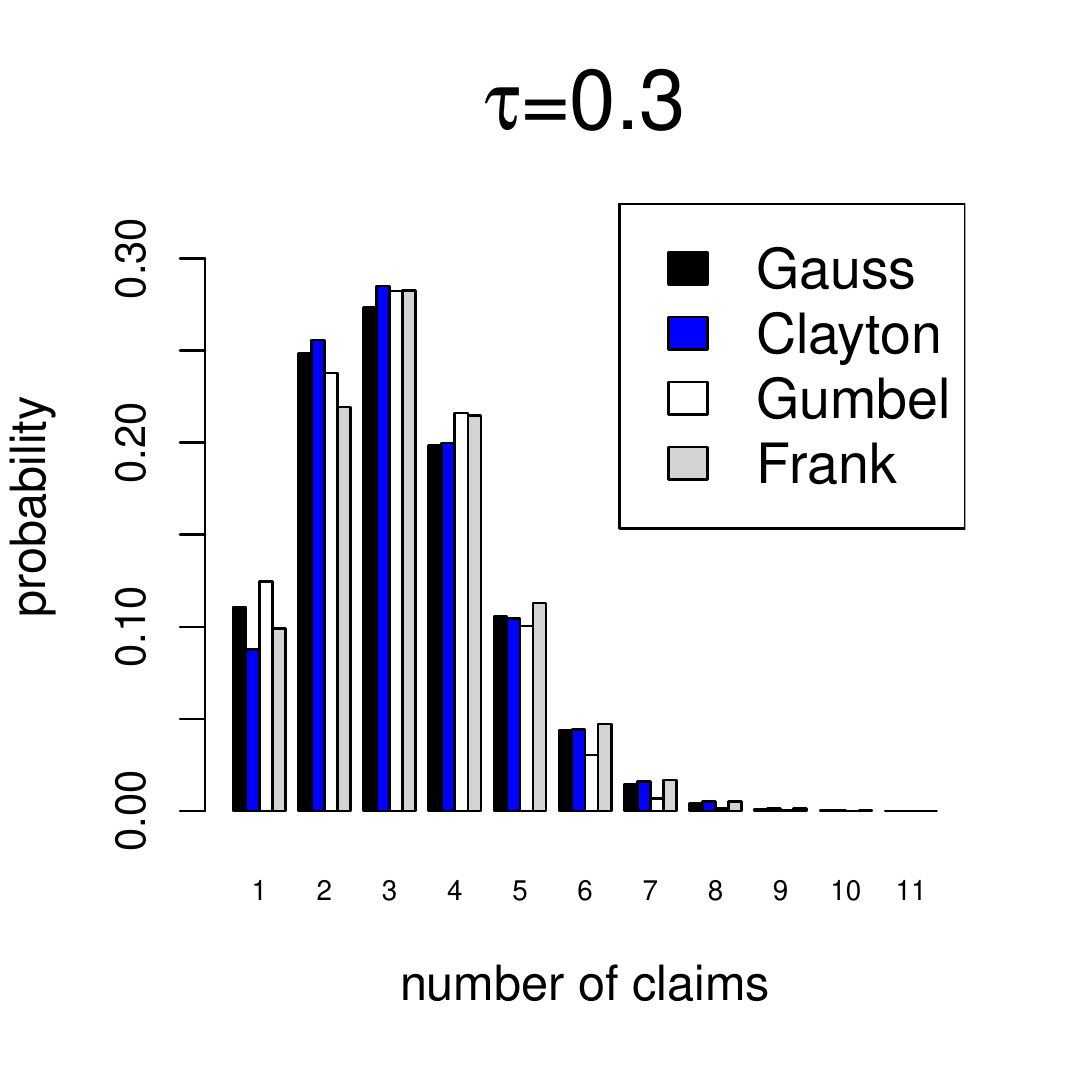}%
\end{center}
\caption{Conditional probability mass function of the number of claims $Y$. Marginal distributions: expected average claimsize $\mu=1000$ Euro with dispersion parameter $\delta=0.09$, expected number of claims $\lambda=2.5$. We condition on an average claim size of $x=1200$ Euro. Left: Gauss copula with $\tau=0;\,0.1;\,0.3;\,0.5\,$. Right: Gauss, Clayton, Gumbel and Frank copula with $\tau=1/3.$}%
\label{fig:cond_dist}%
\end{figure}
The left panel in Figure \ref{fig:cond_dist} displays the conditional probability mass function \eqref{eq:cond_dist} of $Y|X=x$ for a Gauss copula with four different values of $\tau=0;\,0.1;\,0.3;\,0.5\,$. We observe that the
 four probability mass functions are different, and that for higher values of $\tau$, more mass is assigned to larger values of $y$. This is due to the dependence of $X$ and $Y$ and the fact that the conditioning value $x=1200$ Euro is much higher than the expected average claim size of $\mu=1000$ Euro. The right panel displays the conditional probability mass function for $\tau=1/3$ and the four different copula families. The choice of the copula family clearly influences the conditional distribution. In particular, the upper-tail dependent Gumbel copula shifts the distribution to the right compared to the other copulae. This leads to a flexible class of dependence models between a discrete and a continuous variable.
\section{Policy loss estimation} %
 Next, we focus on the distribution of the policy loss.
\begin{definition}[Policy loss] For a policy with average claim size $X$ and number of claims $Y$,  the policy loss is defined as the product of the two quantities,
\begin{align*}
L&:= X \cdot Y \,.
\end{align*}
 \end{definition}
The policy loss is a positive, continuous random variable, and it depends on the four model parameters displayed in Table \ref{tab:parameters}. A main contribution of this paper is the following result.
\begin{theorem}\label{thm:policy}
The distribution of the policy loss $L$ is given by the density function
\begin{align*}
f_L(l|\mu,\delta,\lambda,\theta)
&=  \sum_{y=1} ^\infty \left[  D_1\left(F_X\left(\textstyle\frac{l}{y}|\mu,\delta\right),F_Y\left(y|\lambda\right)|\theta\right) -D_1\left(F_X\left(\textstyle\frac{l}{y}|\mu,\delta\right),F_Y\left(y-1|\lambda\right)|\theta\right)\right]\\
&\quad \cdot \frac{1}{y} f_X\left(\textstyle\frac{l}{y} \left|\mu,\delta\right. \right)
\end{align*}
for $l>0$.
\end{theorem}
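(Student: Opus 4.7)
The plan is to condition on the discrete variable $Y$, use the fact that $Y \geq 1$ so the event $\{L \leq l\}$ decomposes cleanly, and then recognize the joint density from Proposition~\ref{pro:densitymass} after differentiating with respect to $l$.

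First I would write, for $l>0$,
\begin{equation*}
F_L(l) = P(XY \leq l) = \sum_{y=1}^\infty P(X \leq l/y,\, Y=y),
\end{equation*}
where the disjoint decomposition is valid because $Y$ takes values in $\{1,2,\ldots\}$ and $X>0$ almost surely. Next, by the definition of the joint density/mass function in \eqref{eq:density_mass},
\begin{equation*}
P(X \leq l/y,\, Y=y) = \int_0^{l/y} f_{X,Y}(x,y\,|\,\mu,\delta,\lambda,\theta)\,dx.
\end{equation*}

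The main analytical step is to differentiate $F_L$ in $l$ term by term. Applying the fundamental theorem of calculus and the chain rule to each summand gives
\begin{equation*}
\frac{d}{dl}\int_0^{l/y} f_{X,Y}(x,y\,|\,\mu,\delta,\lambda,\theta)\,dx = \frac{1}{y}\, f_{X,Y}\!\left(\tfrac{l}{y},\,y\,\big|\,\mu,\delta,\lambda,\theta\right).
\end{equation*}
Substituting the explicit expression for $f_{X,Y}$ from Proposition~\ref{pro:densitymass} then yields precisely the formula claimed in the statement.

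The only real obstacle is justifying the interchange of the infinite sum with the derivative. I would handle this by noting that each partial sum $F_L^{(N)}(l) := \sum_{y=1}^N P(X\leq l/y, Y=y)$ is non-decreasing and converges monotonically to $F_L(l)$, while the term-by-term derivatives are the nonnegative quantities $\tfrac{1}{y} f_{X,Y}(l/y,y)$. By monotone convergence applied to $\int_0^l F_L^{(N)\prime}(s)\,ds$, the sum of the derivatives integrates to $F_L(l)$, so it must equal $f_L(l)$ almost everywhere; continuity of the summands in $l$ (inherited from continuity of $f_X$ and of $D_1(\cdot,v\,|\,\theta)$ in its first argument for each of the four copula families) then upgrades this to pointwise equality. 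Apart from this convergence bookkeeping, the argument is purely mechanical.
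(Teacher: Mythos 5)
Your proof is correct and follows essentially the same route as the paper: condition on $Y$, change variables $x = l/y$ to produce the Jacobian factor $1/y$, and sum over $y$ (the paper phrases this as computing the joint density of $(L,Y)$ and marginalizing, which is the same computation). Your additional justification of the term-by-term differentiation via monotone convergence is a welcome extra that the paper's proof silently omits.
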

\begin{proof}
For simplicity of notation, we omit the model parameters from the formulas. We consider the two-dimensional random variable
\begin{align*}
\left(L,Y\right)^\top &\in \mathbb{R}^+ \times \{1,2,\ldots\}\,
\end{align*}
and derive its joint density mass function. By definition (see Equation \eqref{eq:density_mass})),
\begin{align*}
f_{L,Y}(l,y)&=\frac{\partial}{\partial l} P(L\leq l,Y=y)\\
&= \frac{\partial}{\partial l} P\left(X\leq \textstyle\frac{l}{y},Y=y\right)
\end{align*}
as $X=L/Y$. Substituting $x=l/y$, we obtain
\begin{align*}
f_{L,Y}(l,y)&=\frac{\partial}{\partial x} P\left(X\leq x,Y=y\right)\cdot \frac{\partial x}{\partial l}\\
&= f_{X,Y}\left(\textstyle\frac{l}{y},y\right) \cdot \frac{1}{y}\,.
\end{align*}
The result then follows by marginalizing over the discrete random variable $Y$.
\end{proof}

This implies that we can evaluate the density of the policy loss for all of our four copula models, given a fixed set of parameters. Further, we can evaluate its mean, variance and quantiles based on the density function.

In a first step, we visualize the densities for a given set of parameters in order to investigate the differences between the four copula types and the degree of dependence between the average claim size and the average number of claims. In the simulation study (Section \ref{sec:simu}) and the case study (Section \ref{sec:real}), we show that in the context of regression, the copula-based model leads to a more precise estimation of the policy loss compared to the independence assumption.

We continue Example \ref{ex:parameters} and use the same parameter settings for the marginal distributions. Figure \ref{fig:total_loss}  displays the  density of the policy loss for all four copula families and for three different values of Kendall's $\tau$ equal to $0.1$, $0.3$ and $0.5$.  First, we observe that the distribution is in general left skewed. Further, we observe that the theoretical densities tend to be multimodal, and the multiple modes become more distinct for increasing values of Kendall's $\tau$. The skewness and multi-modality can be readily explained by Theorem \ref{thm:policy}. Setting
\begin{align*}
\omega(y,l|\mu,\delta,\lambda,\theta)&:= \frac{1}{y}P\left(Y=y\left|X=\frac{l}{y},\mu,\delta,\lambda,\theta\right.\right)\,,
\end{align*}
the density of the policy loss can be written as an infinite mixture of Gamma distributions
\begin{align*}
f_L(l|\mu,\delta,\lambda,\theta)&= \sum_{y=1} ^\infty \omega(y,l|\mu,\delta,\lambda,\theta) \cdot f_X\left(\frac{l}{y} \left|\mu,\delta \right.\right) \,.
\end{align*}
As the individual Gamma densities are skewed, the density of the mixture tends to be skewed, too. Moreover, a mixture of unimodal Gamma densities can be multi-modal as well. The parameter settings of the model influence the number of the modes and how pronounced they are.

\begin{figure}[t]
\begin{center}
\includegraphics[width=3.5cm]{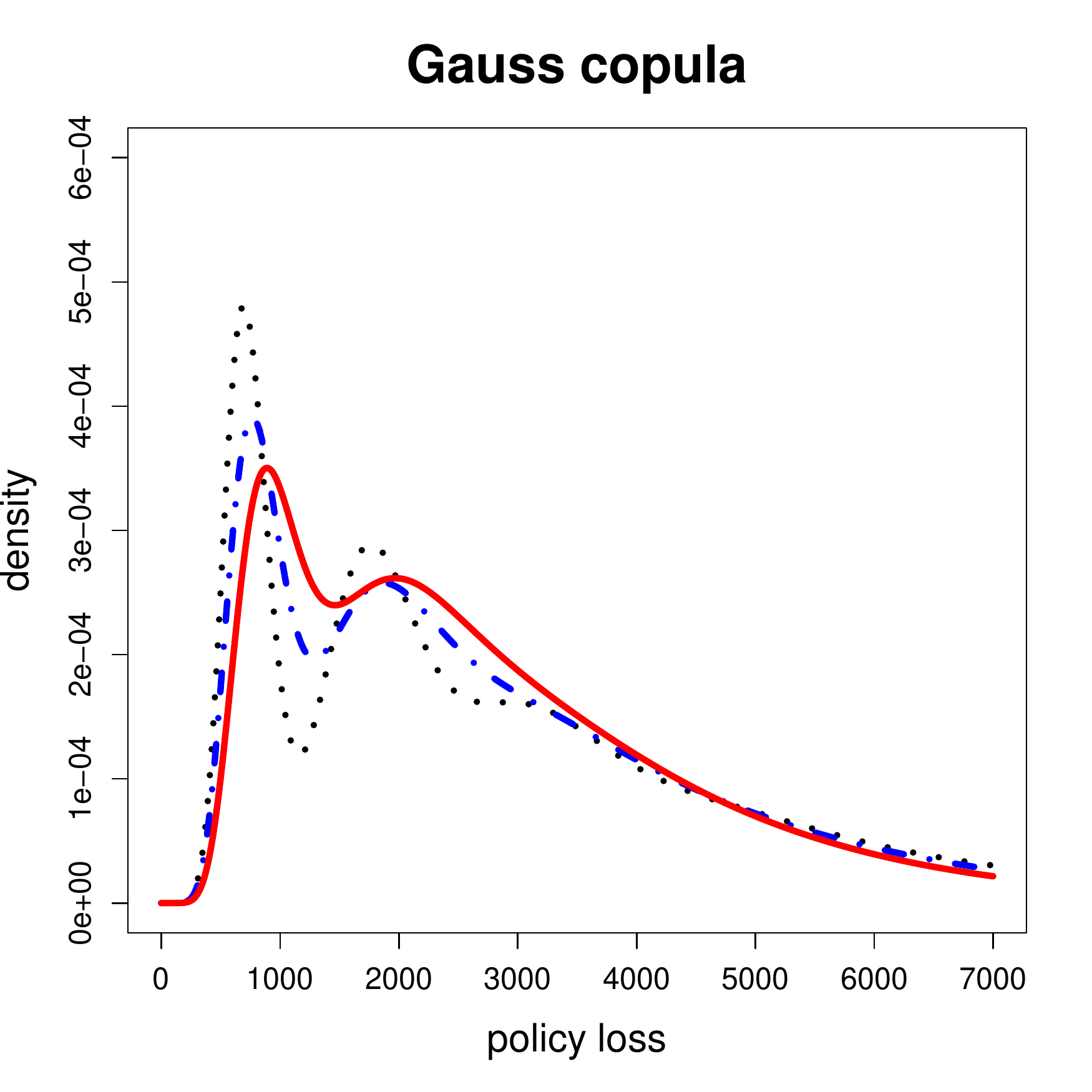}\includegraphics[width=3.5cm]{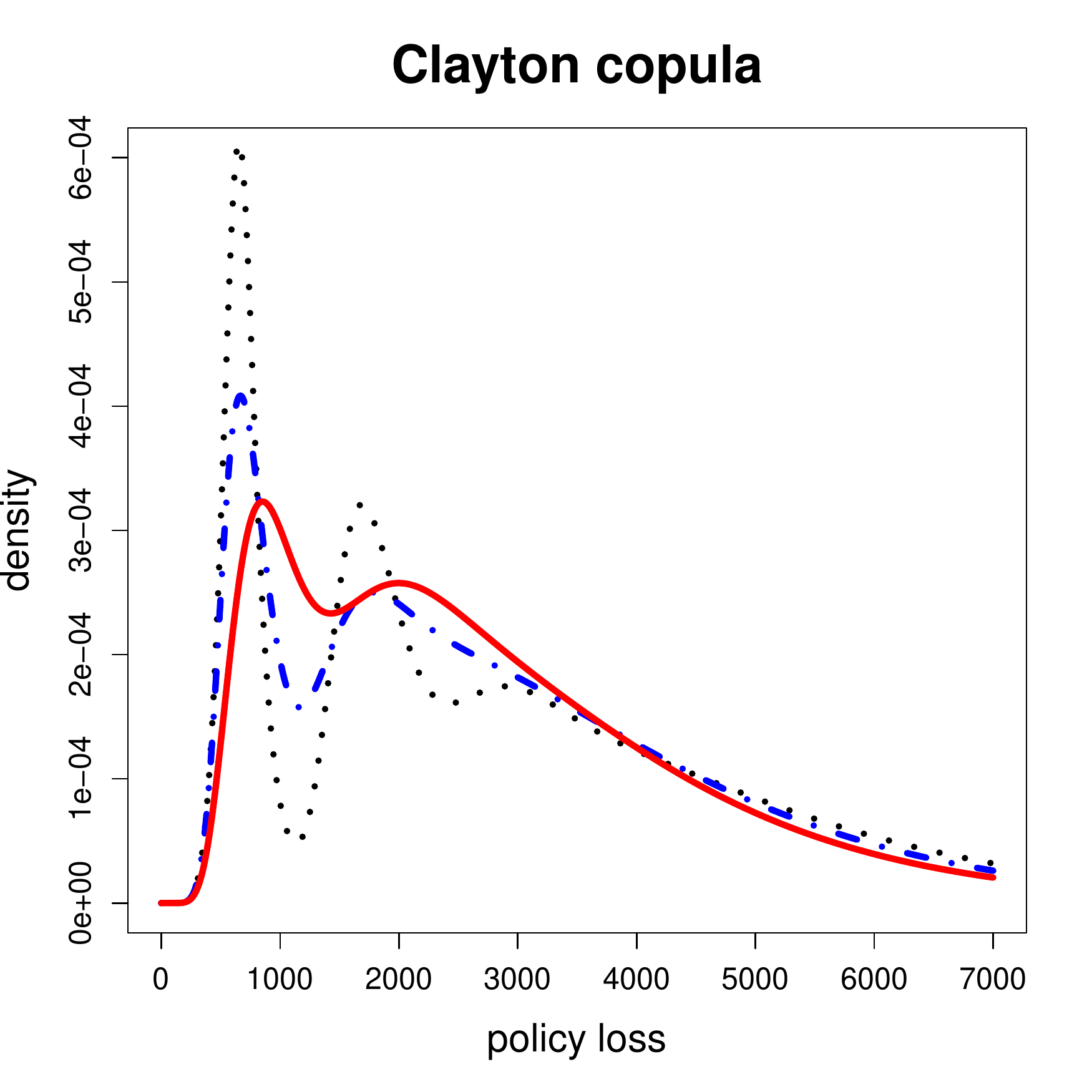}\includegraphics[width=3.5cm]{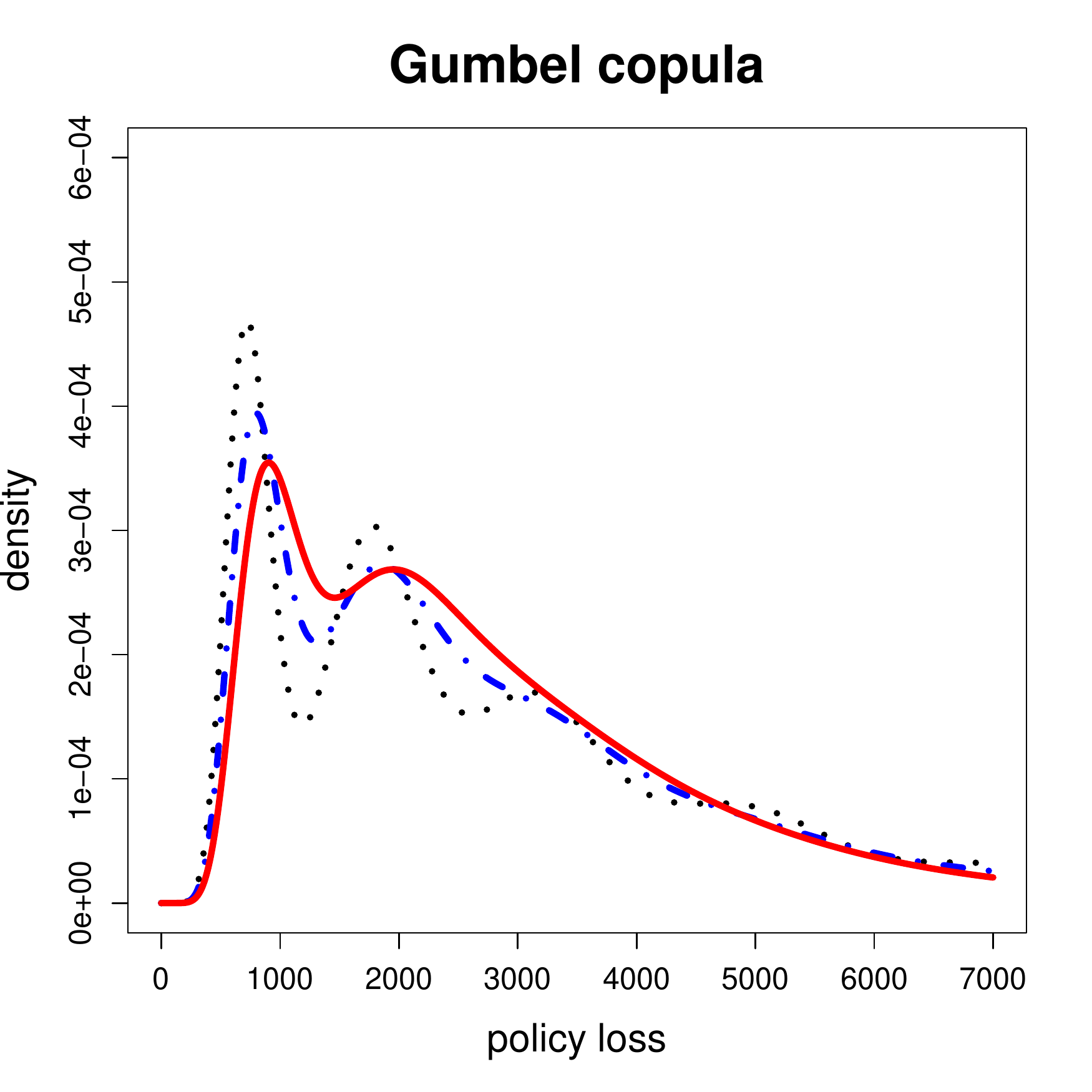}\includegraphics[width=3.5cm]{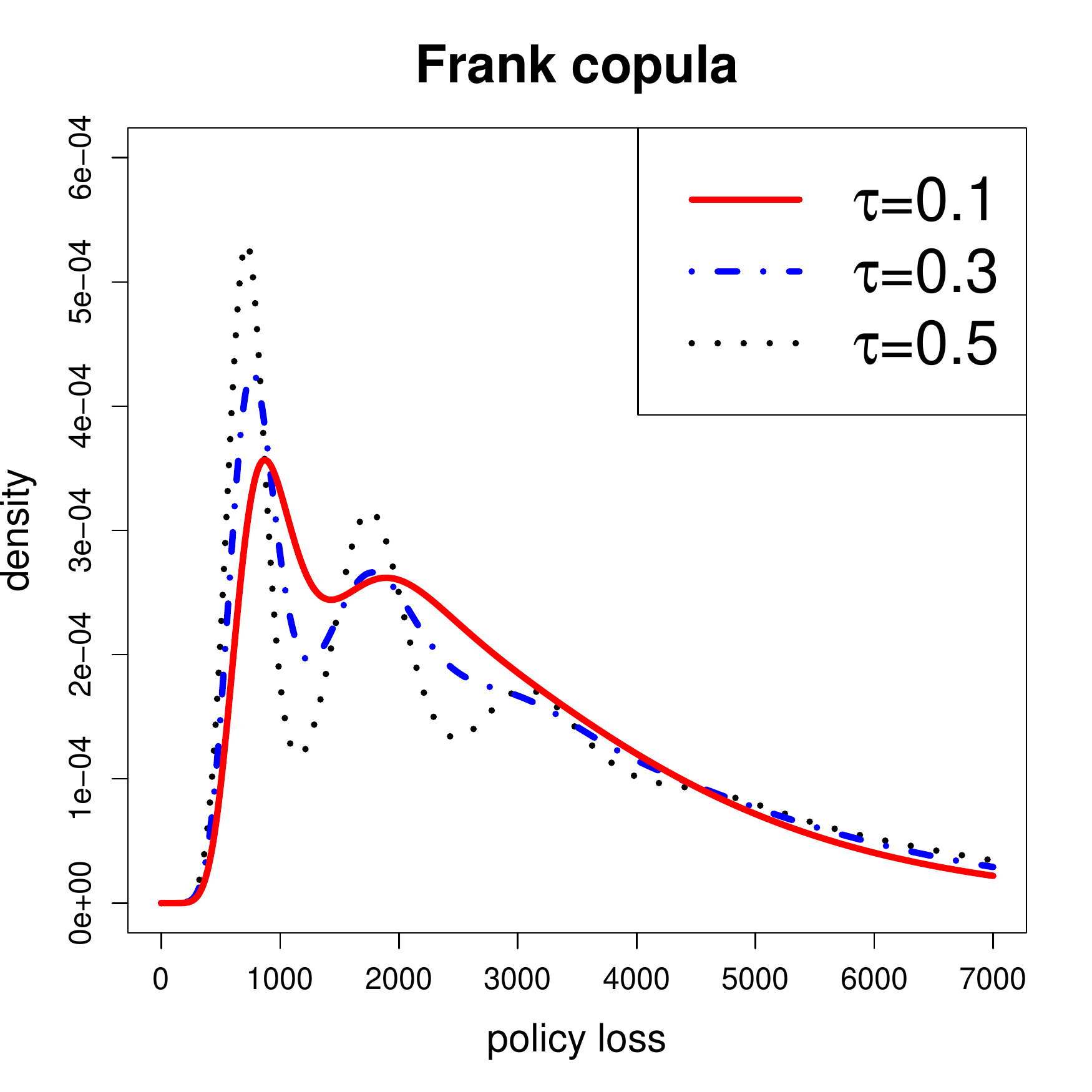}%
\end{center}
\caption{Densities of the policy loss for the four copula families and three different values of Kendall's $\tau$. We use the parameters settings of Example \ref{ex:parameters}.}%
\label{fig:total_loss}%
\end{figure}
Figure \ref{fig:expected_policy_loss} displays the expectation  $\mu_L$, the $25\%$-quantile $q_{0.25;L}$ and $75\%$-quantile $q_{0.75;L}$ of the policy loss as a
function of Kendall's $\tau$. All three quantities are evaluated using  numerical integration and numerical root solvers. We use the parameter settings of Example \ref{ex:parameters} for the marginal distributions. The solid and dotted lines indicate the mean and the quantiles if we assume that average claim sizes and number of claims are independent.  We observe that the independence assumption leads to an overestimation of the policy loss if average claim sizes and number of claims have a negative monotone association (i.e. $\tau<0$), and it leads to an underestimation if $\tau>0$. As an example, we compare the expected policy loss under independence (which equals $2723$ Euro) to the expected policy loss for $\tau=0.2$. We obtain $2860\, (+5\%)$ Euro (Gauss), $2837\, (+4\%)$ Euro (Clayton),  $2880\,(+6\%)$ Euro (Gumbel) and  $2850\,(+5\%)$ Euro (Frank). 

Based on Figures \ref{fig:total_loss} and \ref{fig:expected_policy_loss}, we observe a strong dependence of the distribution of the policy loss on the size of Kendall's $\tau$. However, we do not observe a strong dependence on the choice of the copula family.

\begin{figure}[hb]
\begin{center}
\includegraphics[width=3.5cm]{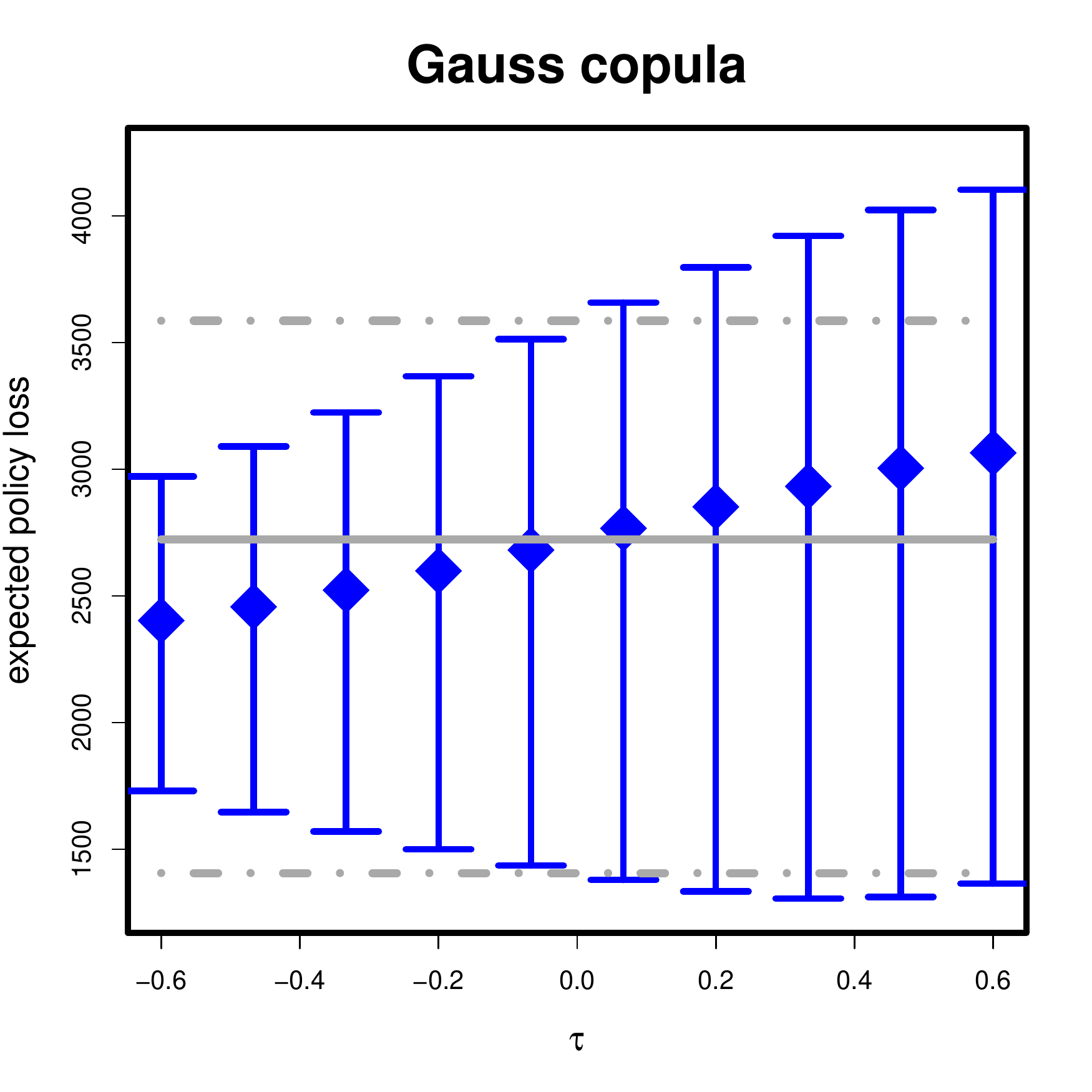}\includegraphics[width=3.5cm]{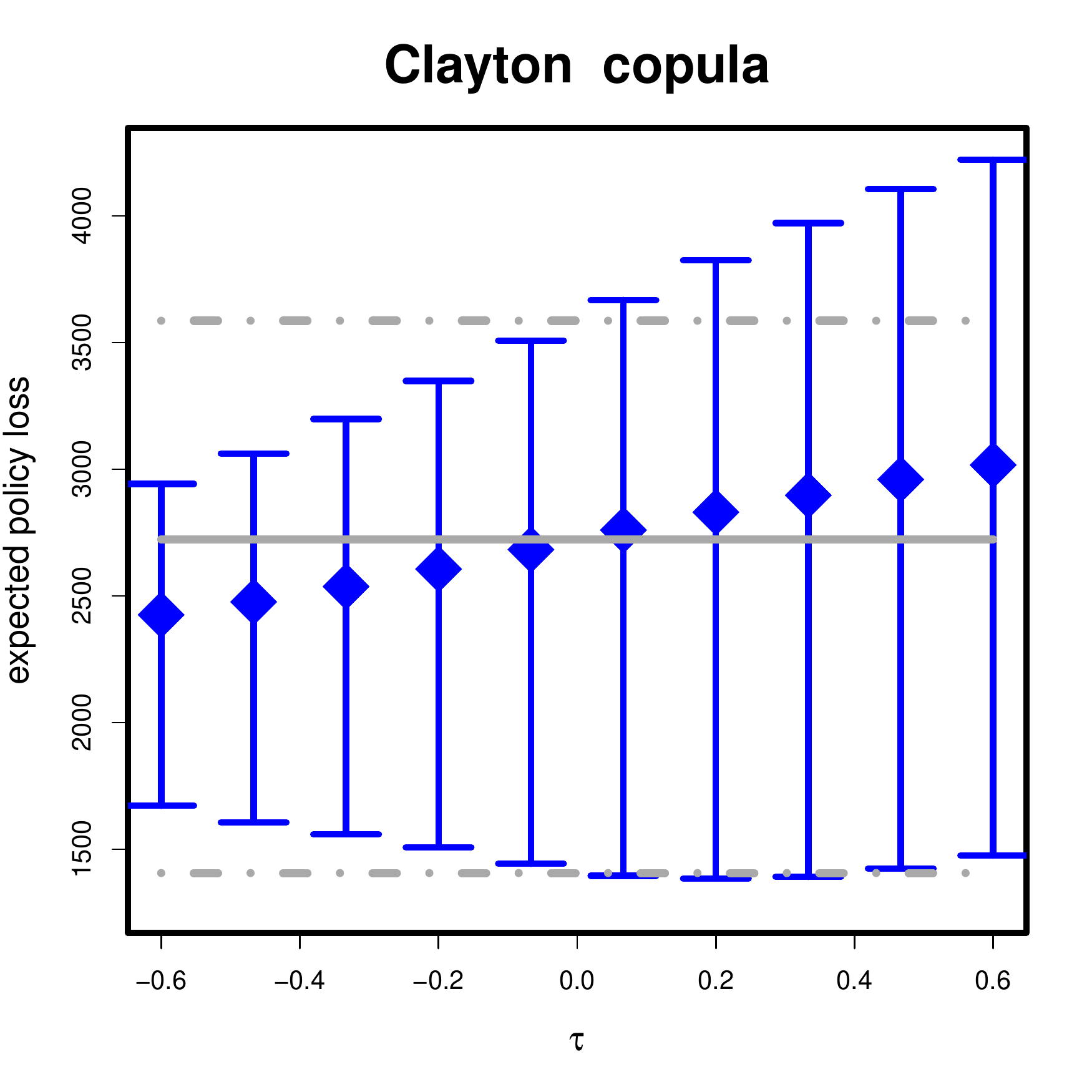}\includegraphics[width=3.3cm]{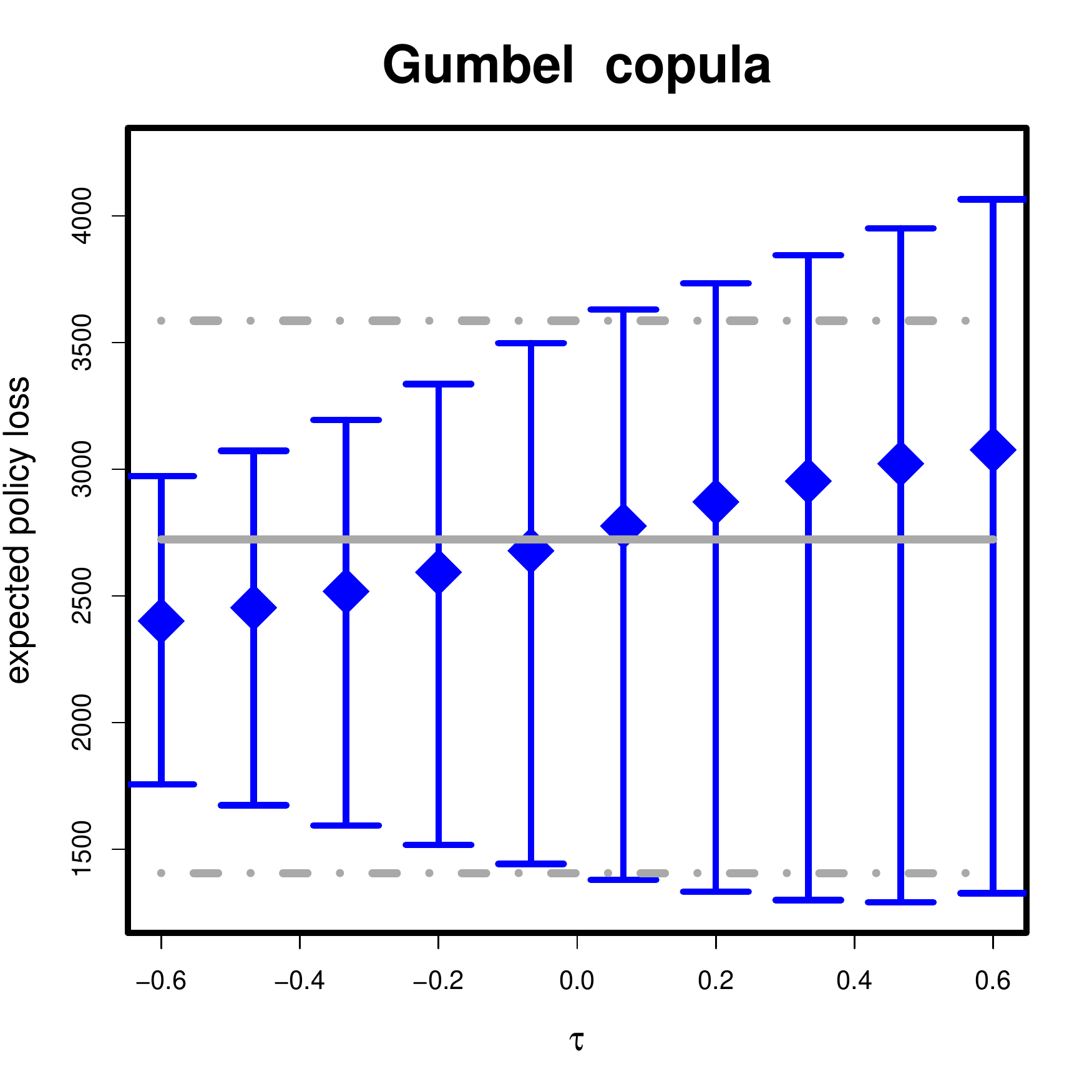}\includegraphics[width=3.5cm]{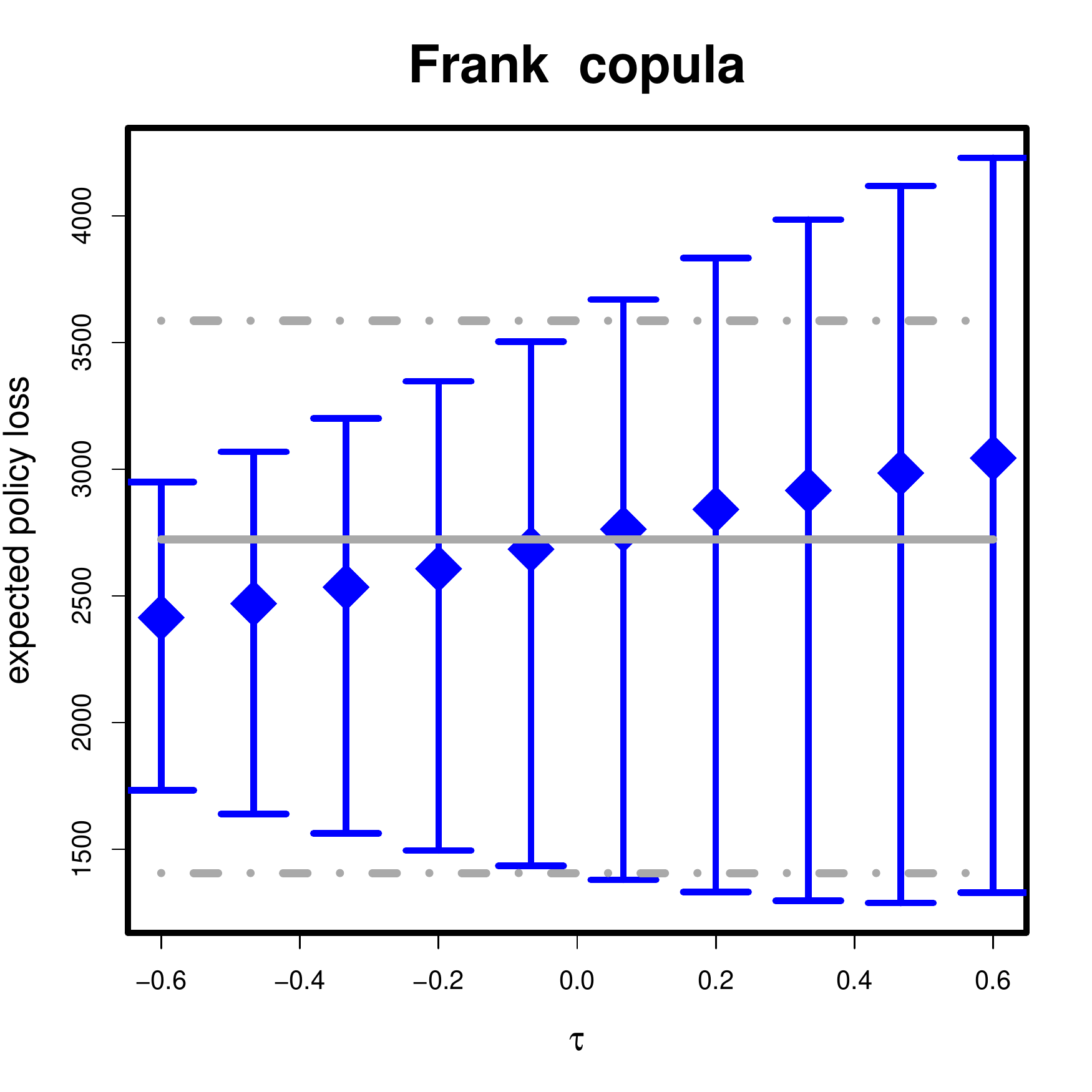}
\end{center}
\caption{Expected policy loss (blue diamonds) and upper and lower quartiles for the four copula families, seen as a function of Kendall's $\tau$. For negative values of Kendall's $\tau$ the Clayton and the Gumbel copula have been rotated. The parameter settings for the marginal distributions are taken from Example \ref{ex:parameters}. The grey solid and dotted lines indicate the expected policy loss and upper/lower quartiles if we assume independence.}%
\label{fig:expected_policy_loss}
\end{figure}
\section{Copula regression model for average claim sizes and number of claims} %
In the two previous sections, we modeled average claim sizes and number of claims independently of possible covariates. In order to incorporate covariates, we use the approach by \cite{czakas10}. We extend the joint model \eqref{eq:jointmodel} for average claim sizes $X$ and number of claims $Y$ by allowing the marginal distributions of $X$ and $Y$ to depend on a set of covariates. More precisely, we apply generalized linear models for the marginal regression problems and combine these with bivariate copula families.
\subsection{Model formulation}
Let $X_i\in\mathbb{R}_{+}$, $i=1,2,\ldots,n,$ be independent continuous random variables and let $Y_i\in\mathbb{N}_{>0}$, $i=1,2,\ldots,n,$ be independent discrete random variables. We model $X_i$ in terms of a covariate vector $\bm r_i\in\mathbb{R}^{p}$ and $Y_i$ in terms of a covariate vector $\bm s_i\in\mathbb{R}^q$. The marginal regression models are specified via
\begin{align*}
X_i&\sim \text{Gamma}(\mu_i,\delta)\qquad\mbox{with }\ln(\mu_i)={\bm r_i} ^\top \bm\alpha,\\
Y_i&\sim \text{ZTP}(\lambda_{i})\qquad\mbox{with } \ln(\lambda_{i})=\ln(e_i)+{\bm s_i}^\top\bm\beta.
\end{align*}
Here $e_i$ denotes the exposure time. We remark that the covariate vectors $\bm r_i$ and $\bm s_i$ can be distinct.


\subsection{Parameter estimation} %
We need to estimate the unknown parameter vector
\begin{align} \label{eq:par}
\bm \upsilon:=(\bm\alpha^\top,\bm\beta ^\top,\theta,\delta)^\top \in \mathbb{R}^{p+q+2}
\end{align}
based on $n$ observation pairs $(x_i,y_i)$. Here, we use maximum-likelihood estimation techniques. By definition, the loglikelihood of the model parameters \eqref{eq:par} is
\begin{align}
\label{eq:loglik}
\ell\left( \bm \upsilon|\bm x,\bm y \right)&= \sum_{i=1} ^n \ln\left( f_{X,Y}(x_i,y_i|{\bm \upsilon})\right)
\end{align}
with
\begin{align*}
{\bm x}= \left(x_1,\ldots,x_n\right)^\top \in \mathbb{R}^n &\text{ and } {\bm y}= \left(y_1,\ldots,y_n\right)^\top \in \mathbb{R}^n\,.
\end{align*}
The maximum likelihood estimates are given by
\begin{align*}
\widehat {\bm \upsilon}&=\text{arg}\max_{ \bm \upsilon} \ell\left( \bm \upsilon|\bm x,\bm y \right)\,.
\end{align*}
In general, there is no closed-form solution. Therefore, we have to maximize the loglikelihood numerically. In this paper, we apply the BFGS optimization algorithm (a quasi Newton method) to maximize the loglikelihood \eqref{eq:loglik}. As the copula parameter $\theta \in \Theta$ is in general restricted (see  \ref{app:copulae}), we transform $\theta$ via a function $g:\Theta \rightarrow \mathbb{R}$ such that $g(\theta)$ is unrestricted. As an example, for the Gauss copula, the copula parameter $\theta$ lies in $]-1,1[$, and the transformation is defined as
\begin{eqnarray*}
g(\theta)&=&\frac{1}{2} \ln \left(\frac{1+\theta}{1-\theta}\right)\,.
\end{eqnarray*}

We then optimize the logliklihood with respect to $(\bm\alpha^\top,\bm\beta ^\top,g(\theta),\delta)^\top$.

Alternatively, we can  estimate the model parameters by applying the inference-for-margins (IFM) principle \citep{Joe96}. Here, we proceed in two steps. First, we estimate the marginal regression models for average claim sizes and number of claims via maximum-likelihood. We obtain estimates
\begin{align*}
\widehat{\bm\mu}&= \exp\left({\bm R} \widehat{\bm \alpha}\right)\in \mathbb{R}^n\\
\widehat{\bm\lambda}&= \exp\left({\bm S} \widehat{\bm \beta}\right)\odot {\bm e}\in \mathbb{R}^n
\end{align*}
for each observation and an estimate $\widehat{\delta}\in \mathbb{R}$ for the dispersion parameter. Here, ${\bm e} \in \mathbb{R}$ is the vector of exposure times, and $\odot$ denotes an element-wise multiplication of two vectors. These estimates are used to transform the observations ${\bm x}$ and ${\bm y}$ to
\begin{align*}
u_i&:= F_X\left(x_i|\widehat \mu_i,\widehat \delta\right) \in [0,1]\\
v_i&:= F_Y\left(y_i|\widehat \lambda_i\right) \in [0,1]\\
w_i&:= F_Y\left(y_i-1|\widehat \lambda_i\right) \in [0,1].
\end{align*}
Here, $F_X$ and $F_Y$ are the distribution functions of a Gamma and zero-truncated Poisson variable respectively. In the second step, we optimize the copula parameter $\theta$ by maximizing the loglikelihood
\begin{align*}
\widetilde \ell\left(\theta|\bm u,\bm v\right):= \sum_{i=1} ^n \ln\left(D_1(u_i,v_i|\theta) -  D_1(u_i,w_i|\theta)  \right)\,.
\end{align*}
The function $\widetilde \ell$ can be maximized numerically. In general, the run-time for the IFM approach is much smaller compared to the maximization of the loglikelihood \eqref{eq:loglik}. In initial simulatons, the performance of the two methods was very similar. This confirms earlier findings by \cite{DeLeonWu2011}. Therefore, in the simulations study below, we only report the results of the maximum likelihood solution, since it is asymptotically more efficient.

Finally, we note that \cite{czakas10} recently proposed an extension of the  maximization by parts algorithm \citep{son05} to estimate the regression parameters. These methods could be easily adapted to our model. In this paper, we do not pursue this approach and estimate the parameters via maximum-likelihood.


\subsection{Asymptotic distribution of the regression parameters} \label{subsec:ci}
For the construction of approximate confidence intervals, we use the Fisher information matrix that  is defined as
\begin{align*}
\mathcal{I}\left( \bm \upsilon \right)&:= E\left[  \frac{\partial \ell(\bm \upsilon|{\bm x},{\bm y})}{\partial {\bm \upsilon}}\cdot \left(\frac{\partial \ell(\bm \upsilon|{\bm x},{\bm y})}{\partial {\bm \upsilon}}\right)^\top\ \right] \in \mathbb{R}^{(p+q+2)\times (p+q+2)}\,.
\end{align*}
Under regularity conditions (see, e.g., \cite{serfling1980}) one can show that
\begin{align*}
\sqrt{n}\left(\bm \upsilon - \widehat{{\bm \upsilon}}\right)&\stackrel{D}{\longrightarrow} \mathcal{N}_{p+q+2} \left( {\bm 0},\mathcal{I}^{-1}\left( \bm \upsilon \right)\right)\,.
\end{align*}
Here, $\mathcal{N}_k$ denotes a $k$-dimensional multivariate normal distribution. For the estimation of the Fisher information, we use the fact that \citep{LehmannCasella1998}
\begin{align*}
\mathcal{I}\left( \bm \upsilon \right)=- E\left[  \frac{\partial^2 \ell(\bm \upsilon|{\bm x},{\bm y})}{\partial^2 {\bm \upsilon}} \right]\,,
\end{align*}
and use the observed Fisher information matrix
\begin{align*}
\widehat{\mathcal{I}}\left( \bm \upsilon \right):=-   \frac{\partial^2 \ell(\bm \upsilon|{\bm x},{\bm y})}{\partial^2 {\bm \upsilon}}\,.
\end{align*}
This is the Hessian matrix of the loglikelihood function. In our case, it is feasible to compute the second partial derivatives explicitly. Moreover, the BFGS optimization algorithm  returns an approximation of the Hessian matrix that is obtained via numerical derivatives. In this paper, we use this approximation to estimate standard errors for the regression coefficients.
\subsection{Selection of the copula family}
Up to now, the copula class is assumed to be fixed. For the comparison of two copula families, we propose the likelihood-ratio test for non-nested hypotheses by \cite{vuong89}. This test is appropriate as our models are non-nested, i.e. the regression model for one copula family cannot be obtained via a restriction of the regression model for the other copula family. Let us denote by ${\bm \ell}^{(1)},{\bm \ell}^{(2)} \in \mathbb{R}^n$ the vectors of pointwise loglikelihoods for a model with copula family 1 and 2 respectively. Here, we assume that both models have the same degrees of freedom, i.e. the same number of parameters. We now compute the differences of the pointwise loglikelihood as
\begin{equation*}
m_i:=\ell^{(1)}_i - \ell^{(2)}_i,\ i=1,\ldots,n\,.
\end{equation*}
Denote by
\begin{eqnarray*}
\overline{m}&=&\frac{1}{n}\sum_{i=1}^n m_i
\end{eqnarray*} the mean of the differences. The test statistic
\begin{equation}\label{eq:vuong}
T_V:= \frac{\sqrt{n}\cdot\overline{m}}{ \sqrt{\sum_{i=1}^n \left(m_i - \overline{m}\right)^2}},
\end{equation}
is asymptotically normally distributed with zero mean and unit variance. Hence, we prefer copula family 1 to copula family 2 at level $\alpha$ if
\begin{equation*}
T_V> \Phi^{-1}\left(1-\frac{\alpha}{2}\right)\,,
\end{equation*}
where $\Phi$ denotes the standard normal distribution function. If
\begin{equation*}
T_V< \Phi^{-1}\left(\frac{\alpha}{2}\right)\,,
\end{equation*}
we prefer copula family 2. Otherwise, no decision among the two copula families is possible. We note that it is possible to adjust the test if the two models have different degrees of freedom.
\section{Estimation of the total loss} %
Recall that we model the policy loss $L_i=X_i\cdot Y_i$ for each policy holder via our joint regression model. Its distribution is determined by Theorem \ref{thm:policy}. In a next step, we are interested in the distribution of the total loss over all policy holders.

\begin{definition}[Total loss] For $n$ policies with average claim sizes $X_i$ and number of claims $Y_i$ (for $i=1,\ldots,n$),  the total loss is defined as the sum of the $n$ policy losses
\begin{equation*}
T:=\sum_{i=1}^n L_i=\sum_{i=1}^n X_i\cdot Y_i\,.
\end{equation*}
 \end{definition}
Just as the individual policy losses, the total loss is a positive, continuous random variable. An application of the central limit theorem leads to the following result.
\begin{proposition}[Asymptotic distribution of the total loss]
For $n$ independent policy losses $L_1,\ldots,L_n$ with  mean $\mu_{L_i}$ and variance $\sigma_{L_i} ^2$, the asymptotic distribution of the total loss $T$ is normal. For
\begin{equation*}
\sigma_n ^2 := \sum_{i=1} ^n \sigma^2 _{L_i}
\end{equation*}
we have
\begin{equation*}
\frac{\sqrt{n}}{\sigma_n}\left( T- \sum_{i=1} ^n \mu_{L_i}\right)\stackrel{\mathcal{D}}{\longrightarrow } \mathcal{N}\left(0, 1\right)\,.
\end{equation*}
\end{proposition}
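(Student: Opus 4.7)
The plan is to apply a central limit theorem for independent but not necessarily identically distributed random variables. Although the policy losses $L_i = X_i Y_i$ are independent across policies by construction, their marginal distributions generally differ since the parameters $\mu_i, \lambda_i$ depend on covariates. Hence the Lindeberg--Feller CLT is the natural tool rather than the classical iid version. Before starting I would flag the normalization in the statement: under the definition $\sigma_n^2 = \sum_{i=1}^n \sigma_{L_i}^2$ we have $\sigma_n^2 = \mathrm{Var}(T)$, so the standardization yielding $\mathcal{N}(0,1)$ is $(T - \sum_i \mu_{L_i})/\sigma_n$ (the $\sqrt{n}$ in the displayed formula appears to be a transcription artifact, and would only be correct if $\sigma_n^2$ instead denoted the average of the $\sigma_{L_i}^2$); I would carry out the proof for the unambiguous form.

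First I would set $Z_i := L_i - \mu_{L_i}$, noting that the $Z_i$ are independent, mean zero, and square-integrable, since both the Gamma marginal for $X_i$ and the zero-truncated Poisson marginal for $Y_i$ possess moments of every order, so the product $L_i$ does as well. Thus $\sum_{i=1}^n Z_i$ has variance exactly $\sigma_n^2$, and it suffices to verify the Lindeberg condition
\begin{equation*}
\frac{1}{\sigma_n^2} \sum_{i=1}^n E\!\left[ Z_i^2 \, \mathbf{1}\{|Z_i| > \varepsilon \sigma_n\}\right] \longrightarrow 0 \quad \text{for all } \varepsilon > 0.
\end{equation*}

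Second, to dispatch this I would invoke the sufficient Lyapunov criterion: for some $\eta > 0$,
\begin{equation*}
\frac{1}{\sigma_n^{2+\eta}} \sum_{i=1}^n E|Z_i|^{2+\eta} \longrightarrow 0.
\end{equation*}
Since the marginals have all moments finite, $E|Z_i|^{2+\eta}$ can be computed (or bounded) in closed form using the joint density from Proposition~\ref{pro:densitymass}. Under the standard mild regularity assumption that the linear predictors $\bm r_i^\top\bm\alpha$ and $\bm s_i^\top\bm\beta$ lie in a bounded range, the absolute central moments $E|Z_i|^{2+\eta}$ are uniformly bounded in $i$ and the individual variances $\sigma_{L_i}^2$ are bounded below by a positive constant. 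Consequently $\sigma_n^2$ grows at least linearly in $n$ and the Lyapunov ratio is $O(n^{-\eta/2}) \to 0$, yielding the claimed convergence.

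The main obstacle, and the step that is less automatic than it looks, is the uniform moment and uniform positivity bookkeeping: for a truly iid model it is immediate, but in the covariate-driven regression setting one must exclude the possibility that the covariates drift in a way that either inflates the higher moments of $L_i$ without bound or degenerates $\sigma_{L_i}^2$ to zero. I would therefore state the compactness/boundedness hypothesis on the covariates explicitly at the start of the proof, after which the remainder is a routine verification using the explicit Gamma and ZTP densities.
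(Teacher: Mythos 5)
Your proof is correct, and it is considerably more than the paper offers: the paper states only that the result follows from ``an application of the central limit theorem'' and provides no proof environment at all. What you supply is precisely the missing content. Your choice of the Lindeberg--Feller framework (verified via the Lyapunov criterion) is the right one, since the $L_i$ are independent but not identically distributed in the regression setting, and you correctly identify the one substantive hypothesis the paper leaves implicit: without uniform control of the higher moments and a uniform positive lower bound on the $\sigma_{L_i}^2$ --- which you obtain from boundedness of the linear predictors --- the Lyapunov ratio need not vanish, so the proposition as stated is not actually a consequence of the classical CLT alone. Your observation that the displayed normalization is inconsistent is also correct: with $\sigma_n^2=\sum_{i=1}^n\sigma_{L_i}^2=\operatorname{Var}(T)$, the quantity $\sigma_n^{-1}\bigl(T-\sum_{i=1}^n\mu_{L_i}\bigr)$ is the one converging to $\mathcal{N}(0,1)$, and the extra factor $\sqrt{n}$ makes the left-hand side diverge. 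One small correction to your side remark: the stated form would not be rescued by letting $\sigma_n^2$ denote the \emph{average} variance either (that replacement produces a normalizer of order $n/\sigma_n$ rather than $1/\sigma_n$); the factor $\sqrt{n}$ simply should not be there. Since you carry out the proof for the correct standardization, this does not affect the validity of your argument.
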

For the estimation of the total loss, we need to estimate  the means  $\mu_{L_i}$ and variances $\sigma_{L_i} ^2$ of the individual policy losses. This is done by replacing the distribution parameters $\mu_i,\delta,\theta,\lambda_i$ of $L_i$ by their estimates obtained from our joint regression model. Then, the mean and variance can be estimated numerically.
\section{Simulation study} %
\label{sec:simu}
We consider a regression problem with $n=500$ policy groups and the following covariates: age, gender and type of car (A, B or C).  We assume that all policy groups contain the same number of persons, which leads to a constant offset. The first column of the design matrices
\begin{equation*}
\bm S:= \bm R:=(\bm r_1,\ldots,\bm r_n) ^\top \in \mathbb{R}^{500\times 5}
\end{equation*}
consists of $1$'s. This corresponds to marginal regression models with an intercept. The second column corresponds to the covariate age, and is drawn uniformly in the range of $18$ and $65$. The third column is the dummy variable corresponding to  female. Here, the probability of a female policy group is set to $1/2$. The last two columns are the two dummy variables corresponding to car type B and car type C. Car type A is represented by the intercept. The probability of a certain car type is set to $1/3$. The vector of regression coefficients are defined in Table \ref{tab:coef}. As an example, in this simulation scenario, a female driver has a negative effect on the average claim size, and a positive effect on the number of claims.
\begin{table}[t]
\begin{center}
\begin{tabular}{rccccc}
\hline
&intercept &age&female&car type B &car type C\\
\hline
average claim size $X$ &$-0.50$&$-0.05$&$-1.00$&$+2.00$&$-0.50$\\
number of claims $Y$ &$-1.00$&$+0.04$&$+0.30$&$+0.30$&$+0.20$
\end{tabular}
\end{center}
\caption{Regression coefficients for the simulation study.}
\label{tab:coef}
\end{table}
We set the constant dispersion parameter $\delta$ of the Gamma distribution to $\delta=0.25$, which implies that the coefficient of variation (CV) fulfills
\begin{equation*}
\text{CV}= \frac{\sqrt{Var(X_i)}}{E(X_i)}= \sqrt{\delta}=\frac{1}{2}\,.
\end{equation*}
We consider the four copula families and three different values of Kendall's $\tau$,
\begin{equation*}
\tau=0.1;\,0.3;\,0.5\,.
\end{equation*}
For each parameter setting, we sample $n=500$ observations from the true copula regression model, and then fit the regression coefficients and Kendall's $\tau$ via maximum likelihood. We consider estimation methods: (1) the independent model: we fit the two marginal regression models and set $\tau=0$. (2) the joint, copula-based model.  We also compute the estimated loss for each of the $n$ policies. We repeat this procedure $R=50$ times. To evaluate the performance of the two approaches, we consider the following measures for the estimated regression coefficients and the expected policy loss. For a parameter vector $\bm \gamma \in \mathbb{R}^k$ with estimate $\widehat{\bm \gamma}$, we are interested in the relative mean squared error which is defined as
\begin{equation}\label{eq:relmse}
\text{MSE}_{\text{rel}} := E\left( \frac{1}{k}\sum_{i=1} ^k \left(\frac{\gamma_i -\widehat{\gamma} _i}{\gamma_i}  \right)^2\right)\,.
\end{equation}
In the  $r$th iteration step, we obtain an estimate of \eqref{eq:relmse} via
\begin{equation*}
\widehat{\text{MSE}}_{\text{rel}} ^{(r)}:= \frac{1}{k}\sum_{i=1} ^k \left(\frac{\gamma_i -\widehat{\gamma}^{(r)}_i}{\gamma_i}  \right)^2\,.
\end{equation*}
Here,  $\widehat{\bm \gamma}^{(r)}$ is the estimate of ${\bm \gamma}$ obtained in the $r$th step. In the simulation study, we compare the mean relative mean squared error
\begin{eqnarray*}\label{eq:mean}
\overline{\text{MSE}}_{\text{rel}} &=& \frac{1}{R} \sum_{r=1} ^R \widehat{\text{MSE}}_{\text{rel}} ^{(r)}
 \end{eqnarray*}
computed over all $R$ simulation runs. Note that its variance can be estimated via
\begin{eqnarray*}\label{eq:var}
S^2_{{\overline{\text{MSE}}}_{\text{rel}}} &=& \frac{1}{R} \cdot \frac{1}{R-1} \sum_{r=1} ^R \left(\widehat{\text{MSE}}_{\text{rel}} ^{(r)} -  \overline{\text{MSE}}_{\text{rel}} \right)^2\,.
\end{eqnarray*}

Further, we investigate the size of the estimated $\tau$, the estimated total loss, and the value of the Akaike information criterion
\begin{align*}
\text{AIC}&:= - 2 \ell\left(\widehat{\bm \upsilon}|{\bm x},{\bm y}\right) + 2 \text{DoF}\,,
\end{align*}
where the Degrees of Freedom (\text{DoF}) are the number of estimated parameters in the model. Note that we have $p+q+2=12$ Degrees of Freedom for the joint model and $p+q+1=11$ Degrees of Freedom for the independence model. We prefer the model with the lower AIC score.

\begin{figure}%
\begin{center}
\includegraphics[width=5cm]{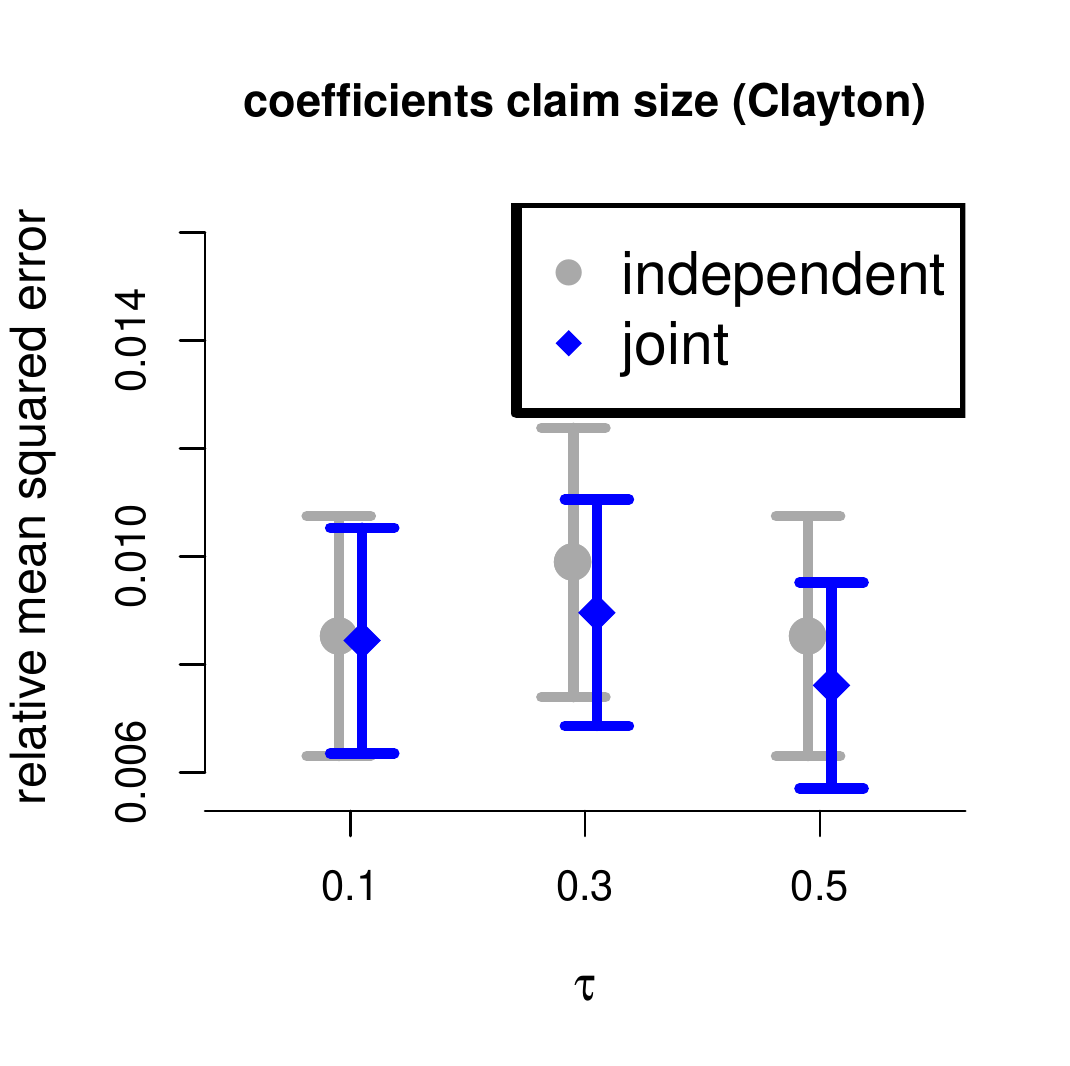}\includegraphics[width=5cm]{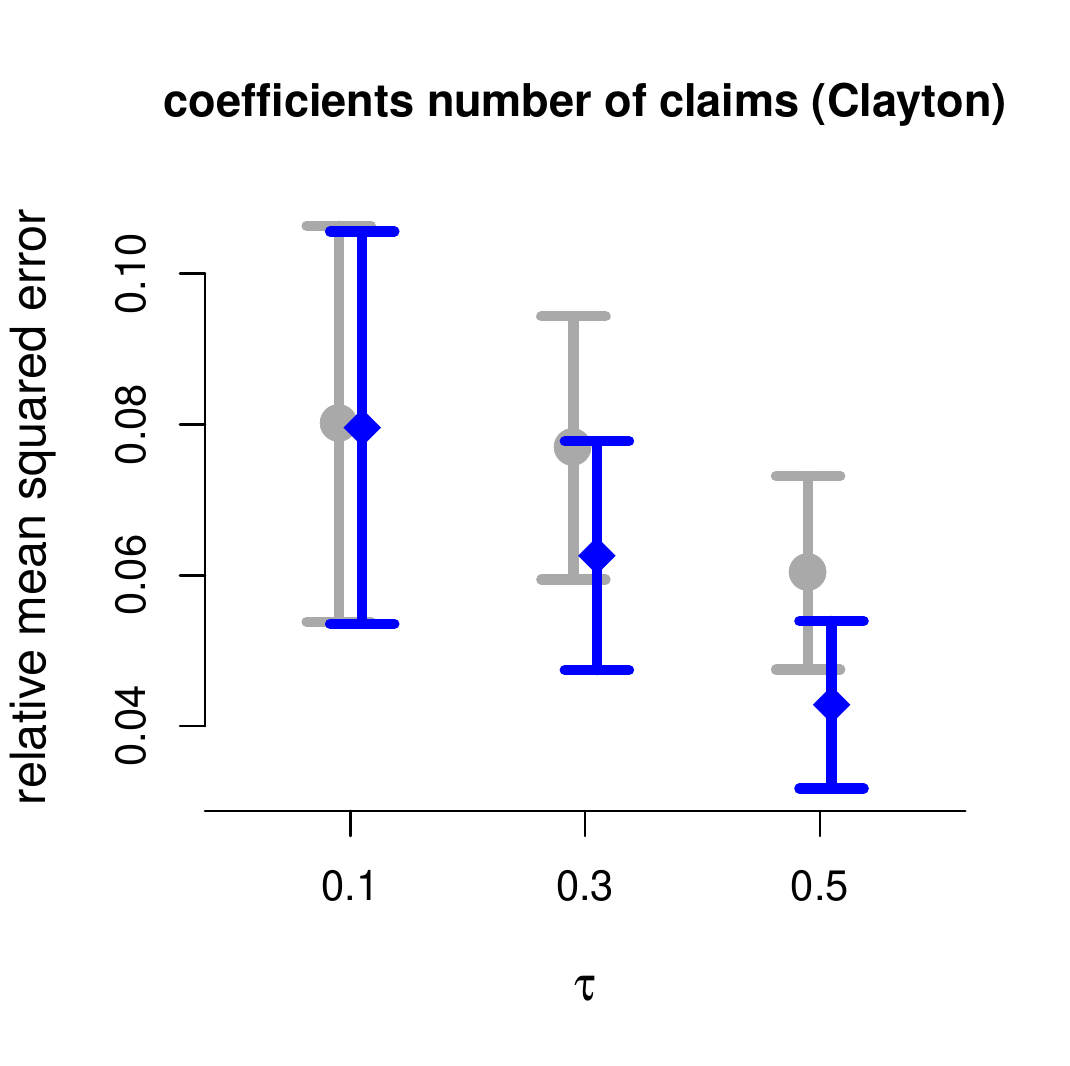}\includegraphics[width=5cm]{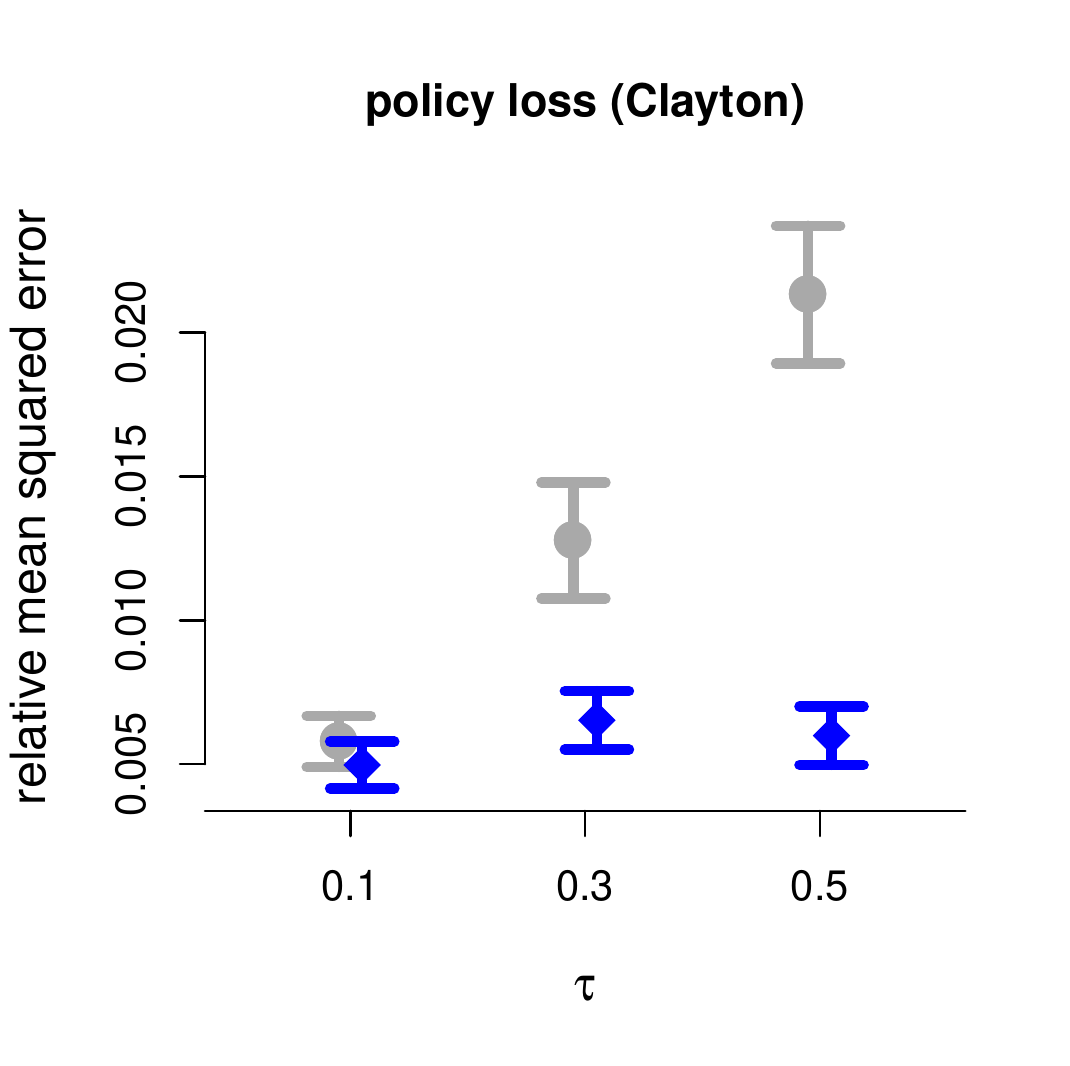}\\
\includegraphics[width=5cm]{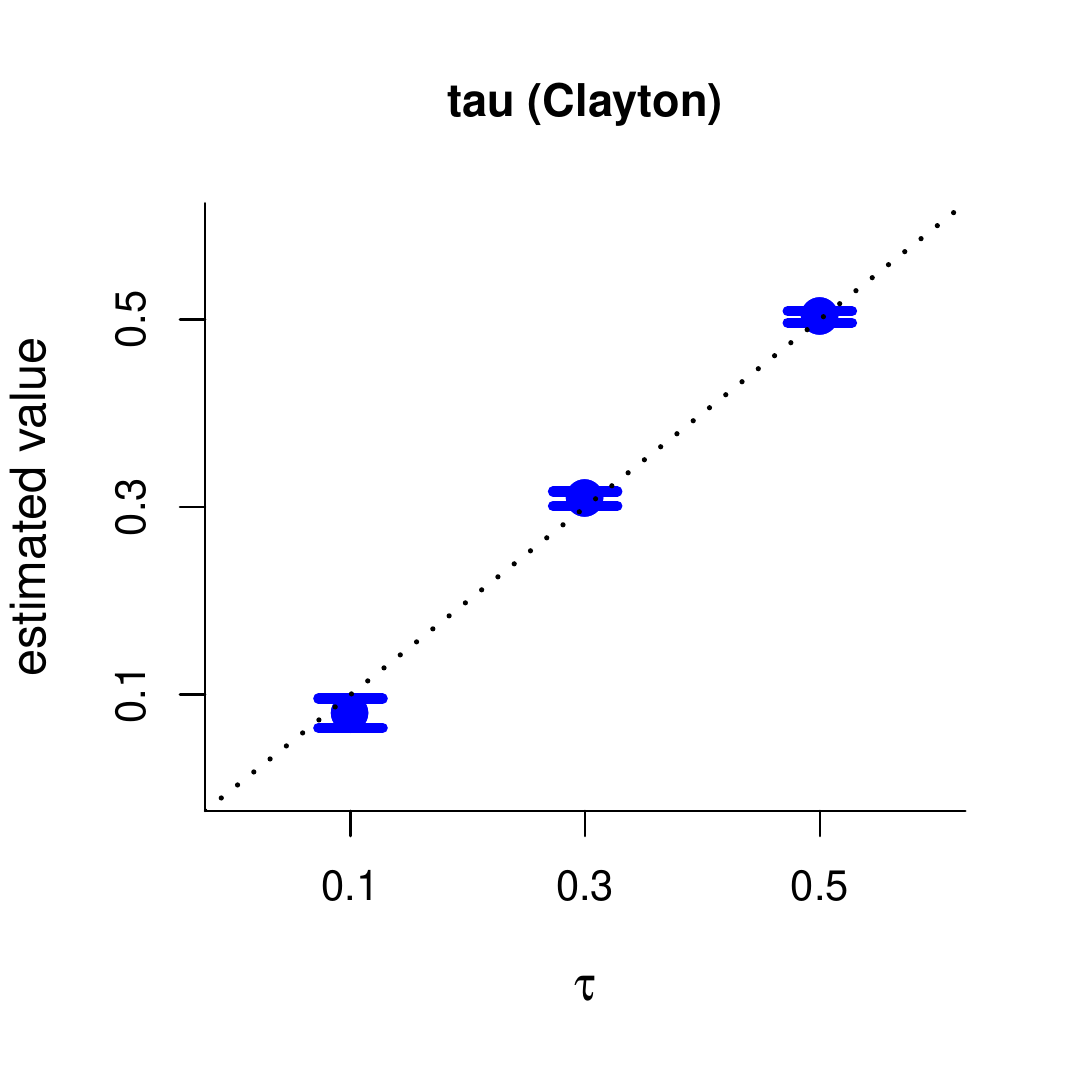}\includegraphics[width=5cm]{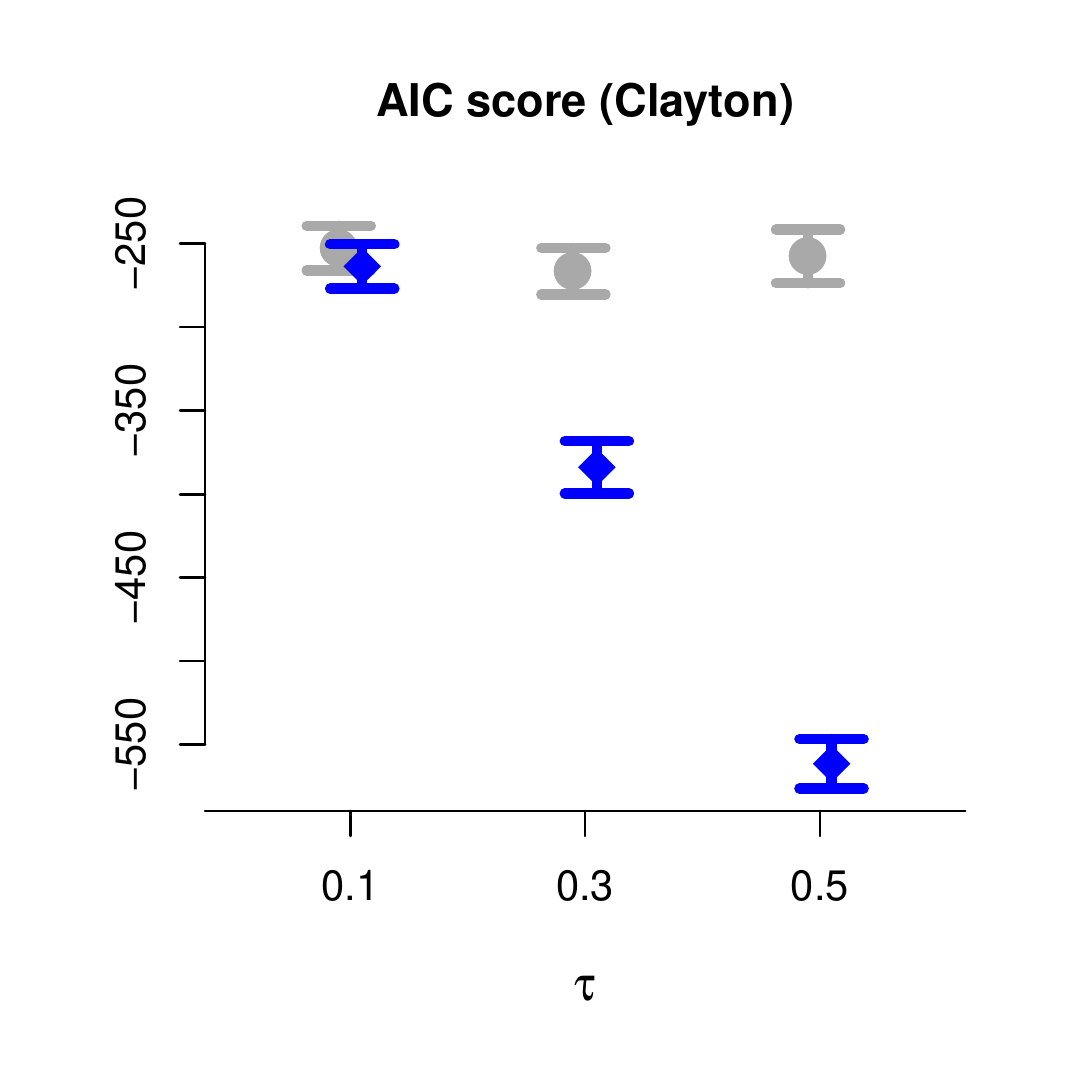}\includegraphics[width=5cm]{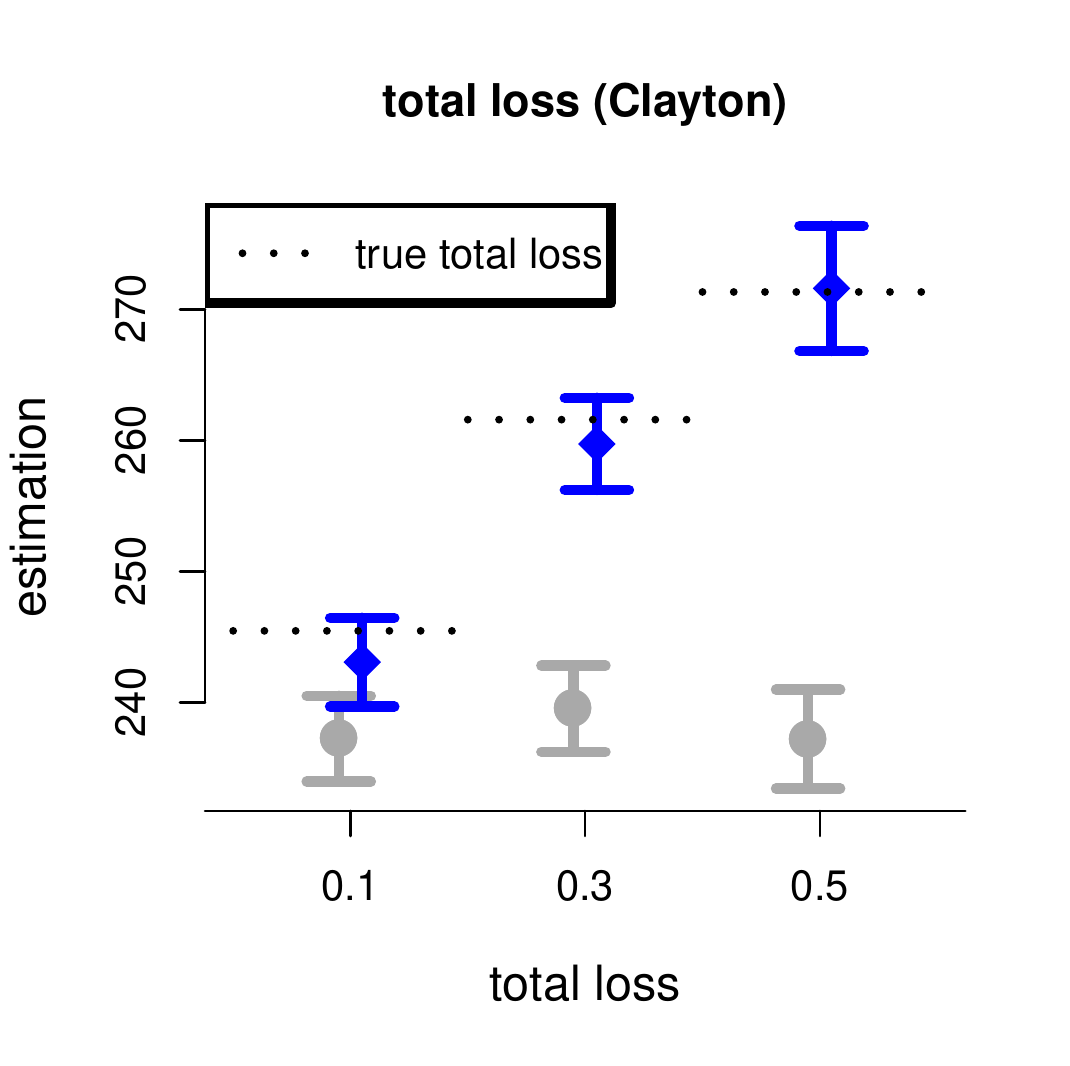}
\end{center}
\caption{Results of the simulation study for the Clayton copula. Top row: relative mean squared error \eqref{eq:relmse} for the average claim size (left), the number of claims (center) and then policy loss (right). Bottom row: estimated Kendall's $\tau$ (left), AIC score (center) and estimated total loss (right). We display the mean over $R$ runs. The width of the whiskers is twice the estimated standard deviation of the mean.  Whiskers that are not displayed are too narrow to be visualized.}
\label{fig:clayton}%
\end{figure}

Figure \ref{fig:clayton} displays the results for the Clayton copula. For each quantity that we compute in each of the $R$ simulation runs, we display the mean over all $R$ runs. The means are indicated by a square. The width of each error bar equals twice the standard deviation of the quantity, divided by $\sqrt{R}$.

The upper row in Figure \ref{fig:clayton} displays the relative mean-squared error of $\widehat {\bm \alpha}$, $\widehat {\bm \beta}$, and the estimated expected policy loss. Overall, we observe that the relative mean squared error for the regression parameters (left and center panel) are not significantly different . For the policy loss however (right panel), the relative mean squared error is  lower for the joint, copula-based model, and this improvement becomes more pronounced for higher values of $\tau$.

The first column of the second row displays the estimated value of Kendall's $\tau$. Here, the dashed line indicates the true value of Kendall's $\tau$. We observe that the  estimation of $\tau$ is very good.  Moreover, the AIC score (center panel in the second row) of the joint model is lower than the one of the marginal models . This shows that if joint model is the true model, then we have to use the joint estimation approach, i.e. the dependence cannot be ignored. The right lower panel displays the estimated total loss. The dashed horizontal lines are the true values of the total loss for the respective value of Kendall's $\tau$. We observe that the independence model systematically underestimates the total loss. This confirms the conclusions drawn  from Figure \ref{fig:expected_policy_loss}.

The results for the three other copula families confirm all the findings made for the Clayton copula. We display the results in  \ref{app:figures}.
\section{Case study: car insurance data}\label{sec:real}
We consider data provided by a German insurance company. It contains car insurance data for $7663$ German insurance policy groups from the year 2000. It contains seven covariates and information on the exposure time. All seven covariates are categorical. The data was previously analyzed by \cite{czakas10}.  Details on the covariates are given in Table \ref{tab:covariates}.
\begin{table}[t]
\begin{center}
\begin{tabular}{ccc}
\hline
name &description &number of categories\\
\hline
\texttt{gen} & driver's gender &2\\
\texttt{rcl} & regional class &8\\
\texttt{bonus}& no-claims bonus &7\\
 \texttt{ded}& type of deductible &5\\
\texttt{dist} &distance driven &5\\
\texttt{age} & driver's age &6\\
\texttt{const} & construction year of the car &7
\end{tabular}
\end{center}
\caption{Covariates in the German car insurance data set.}
\label{tab:covariates}
\end{table}
\subsection{Marginal models}\label{subsec:marmodel}
We first analyze the marginal models. We fit a Gamma regression model for the average claim size, and a zero-truncated Poisson regression model for the number of claims. Next, we investigate the significance of the estimated regression parameters $\widehat{\bm \alpha}$ and $\widehat{\bm \beta}$. We are interested in those coefficients that are significantly different from $0$. Recall (see Section \ref{subsec:ci}) that asymptotically, these estimates are normally distributed, and that we can construct approximate confidence intervals using the observed Fisher information. In addition, we adjust the tests for  multiple comparisons and the dependence of the estimators \citep{Hothorn08}. For the number of claims, the covariates age and construction year  do not have any significant coefficients on a level of $\alpha=0.05$.  Wit re-fit the marginal models, leaving out the respective non-significant covariates. Figure \ref{fig:ci} displays the joint $95 \%$ confidence intervals of the coefficients, showing that the remaining covariates are significant on the $5\%$-level.

\begin{figure}[t]
\begin{center}
\includegraphics[width=7.5cm]{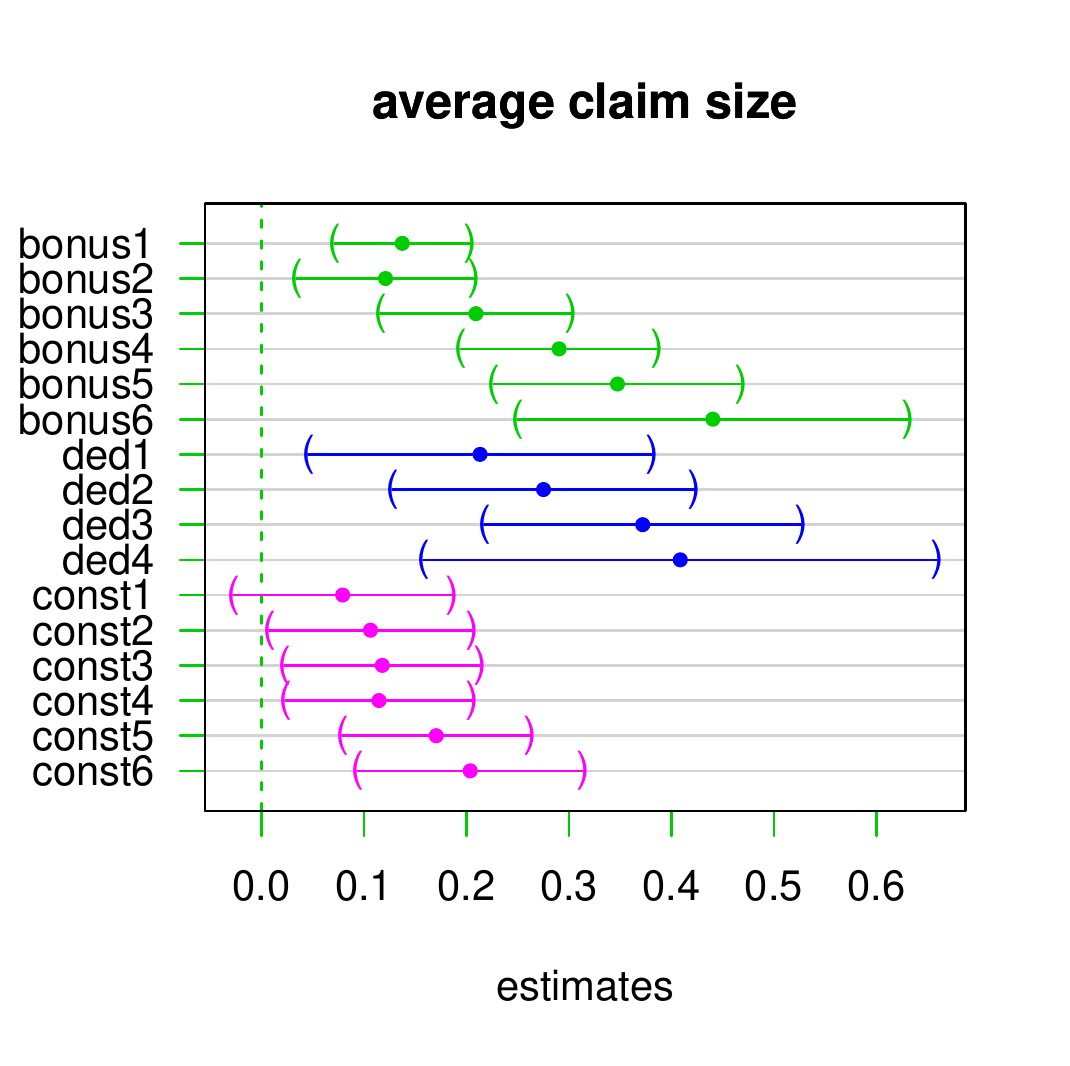}\includegraphics[width=7.5cm]{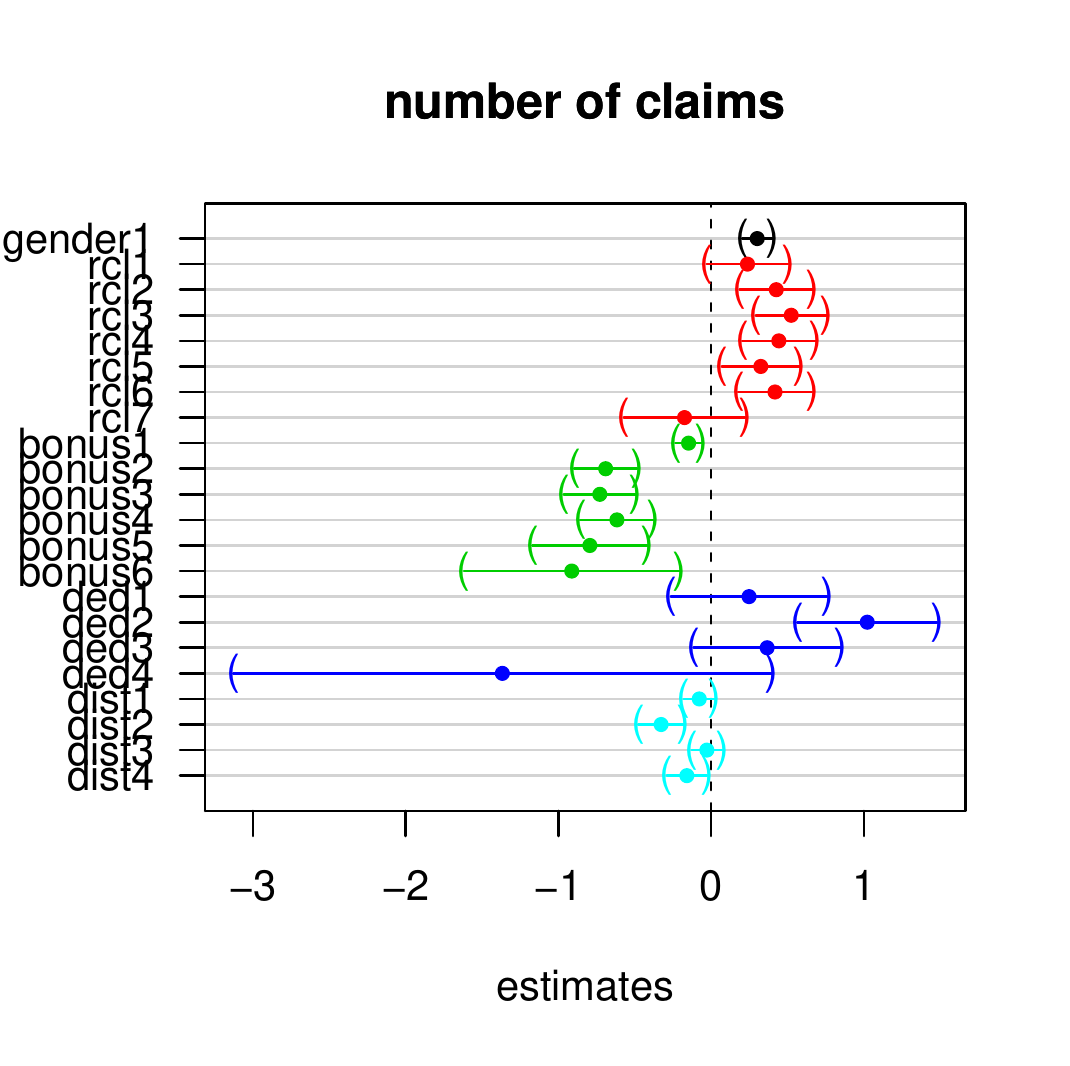}
\end{center}
\caption{Marginal regression models. Joint $95 \%$ confidence intervals for the regression coefficients. Left: Average claim size. Right: number of claims.}
\label{fig:ci}
\end{figure}

\subsection{Joint model}
We use the covariates selected in Section \ref{subsec:marmodel} and fit the joint regression model for each copula family. For each pair of copula families, we perform a corresponding Vuong test. Table \ref{tab:vuong} displays the results. For each pair, we display the copula family that is selected on a $\alpha=0.05$-level. In parentheses, we display the value of the Vuong test statistic \eqref{eq:vuong}. Note that a value $>2$ indicates that we select model 1, and that a value $<-2$ indicates that we select model 2.
\begin{table}[b]
{\scriptsize{\begin{center}
\begin{tabular}{cc||c|c|c|c}
&&\multicolumn{4}{c}{model 2}\\
&&Gauss&Clayton&Gumbel&Frank\\
\hline
\hline
\multirow{4}{*}{\rotatebox{90}{model 1}}&Gauss& -&{\it Clayton} (-10.37) & {\it Gauss} (+6.11)& {\it Frank} (-5.34)\\
&Clayton&{\it Clayton} (+10.37)&-& {\it Clayton} (+9.23)& {\it Clayton} (+9.54)\\
&Gumbel&{\it Gauss} (-6.11)&{\it Clayton} (-9.23)&-& {\it Frank} (-6.54)\\
&Frank&{\it Frank} (+5.34)&{\it Clayton} (-9.54)&{\it Frank} (+6.54)&-
\end{tabular}
\end{center}}}
\caption{Pairwise Vuong tests. We display the copula family that is selected on a $\alpha=0.05$-level. In parentheses, we display the value of the Vuong test statistic \eqref{eq:vuong}. A value $>2$ indicates that we select model 1, and that a value $<-2$ indicates that we select model 2.}
\label{tab:vuong}
\end{table}
We conclude that the Clayton copula is preferred to each of the three other copula families. Therefore, for the remainder of this section, we continue our analysis with the Copula family.  The AIC score for the Clayton model and independence model are
\begin{align*}
\text{AIC}_{\text{clayton}}&= 46\,682.35\\
\text{AIC}_{\text{independence}}&=46\,921.67\,.
\end{align*}
In terms of model comparison, this implies that the copula-based model is more appropriate than the independence model. The estimated value of Kendall's $\tau$ $\pm$ its estimated standard deviation equals
\begin{equation*}
\widehat{\tau}_{\text{clayton}}=0.268 \pm 0.098\,,
\end{equation*}
which corresponds to a moderate, positive dependence between average claim sizes and number of claims. As a comparison, we note that the estimated value of Kendall's $\tau$ for the Gauss copula equals $0.157$, which implies that the selection of the copula family has a considerable effect on the estimation of the dependence parameter. Finally, we investigate the impact of this dependence on the estimation of the total loss. For the copula and independence model respectively, we obtain an estimated total loss $\pm$ its estimated standard deviation of
\begin{align*}
\widehat{E}_{\text{Clayton}}\left(\text{T}\right)&= 81\,751.07 \pm 1239.766\\
\widehat{E}_{\text{independence}}\left(\text{T}\right)&=76\,324.45 \pm 1103.301\,.
\end{align*}
As already illustrated in the simulation study, the negligence of the dependency structure leads to considerably lower estimates of the total loss. In our case, this corresponds to a ratio of
\begin{equation*}
\frac{\widehat{E}_{\text{independence}}\left(\text{T}\right)}{\widehat{E}_{\text{Clayton}}\left(\text{T}\right)}=0.934\,,
\end{equation*}
which indicates a possibly severe underestimation of the independence model in presence of frequency-severity dependence.
The more conservative estimate by the copula-based model takes this dependence into account and will thus result in a more appropriate premium rating protecting the insurance company from huge losses in the portfolio.

\section{Summary and discussion}
In this paper, we model average claim sizes and number of claims if both quantities are dependent. We provide exact distributions of individual policy losses, which tend to be left-skew, and -- depending on the parameters of the model -- can be multi-modal. Further, we propose a regression approach that models average claim sizes and number of claims in terms of a set of covariates. We showed theoretically and empirically that the explicit incorporation of the  dependency in terms of copulae has a profound impact on the estimation of the individual policy loss and the total loss.

Our model explicitly incorporates the discrete structure of the number of claims and allows a flexible class of copula families. This extends  previous work that only consider the Gauss copula \citep{czakas10,DeLeonWu2011}. In our case study, we demonstrated that other copula families are more appropriate.

 We stress that our approach does not depend on the particular choice of the marginal distribution or copula family, and it can be extended to other parametric distributions and families (see e.g. \cite{Yee96} and \cite{Yee10} for an overview on appropriate marginal distributions). Moreover, in the case of higher-dimensional mixtures of discrete and continuous random variables, pair-copula constructions \citep{aas09} can be used (see e.g. \cite{pag12} and \cite{jakob12}).

In our  simulation study, we showed that a model that assumes independence of average claim sizes and number of claims  consistently underestimates the total loss of the insurance portfolio implying a severe mispricing of policies. Knowing the true distribution of the policy loss and total loss allows us to correctly assess some risk. This is underpinned in our case study on German car insurance policies. Here, we select relevant covariates for the marginal models and choose the appropriate copula family for the dependence structure using a Vuong test. The data shows a moderate positive dependence. We illustrate that this leads to a more conservative estimation of the total loss, which avoids huge losses in the insurance portfolio and thus possibly filing for bankruptcy. Respecting actuarial prudence therefore requires to take into account possible dependencies between average claim sizes and numbers of claims.

\bibliographystyle{model2-names}
\bibliography{total_loss_submission}

\appendix
\clearpage
\section{Copulae} \label{app:copulae} %
Table \ref{tab:overview_copula} provides the definition of the four bivariate copula families, their relationship to Kendall's $\tau$ and their first partial derivative. Here $\Phi_2$ is the cumulative distribution function of the bivariate standard normal distribution, and $\Phi$ is the cumulative distribution function of the univariate standard normal distribution. Further,
\begin{eqnarray*}
D_k(x)&=&\frac{k}{x^k}\int\limits_{0}^{x}\frac{t^k}{e^t-1}dt\,.
\end{eqnarray*}
denotes the Debye function which is defined for $k\in\mathbb{N}$.
\begin{table}[hb]
{\scriptsize{\begin{center}
		\begin{tabular}{lccc}
		\hline
		family & copula $C(u,v,\theta)$ & range of $\theta$&relationship to Kendall's $\tau$\\
		\hline
		Gauss & $\Phi_2\left(\Phi^{-1}(u),\Phi^{-1}(v)|\theta\right)$&$]-1,1[$&$\tau=\frac{2}{\pi}\arcsin(\theta)\in \mathbb{R}$\\
		Clayton & $\left(u^{-\theta}+v^{-\theta}-1\right)^{-1/\theta}$&$]0,\infty[$&  $\tau=\frac{\theta}{\theta+2}\in ]0,\infty[$\\
		Gumbel & $\exp\left(-\left(\left(-\log u\right)^\theta+\left(-\log v\right)^\theta\right)^{1/\theta}\right)$&$[1,\infty[$&$\tau=\frac{\theta-1}{\theta}\in [0,\infty[$\\
		Frank & $-\frac{1}{\theta}\log\left(1+\frac{\left(e^{-\theta u}-1\right)\left(e^{-\theta v}-1\right)}{e^{-\theta}-1}\right)$&$\mathbb{R}\backslash\{0\}$&$\tau=1-\frac{4}{\theta}\left[1-D_1(\theta)\right]\in \mathbb{R}\setminus\{0\}$
		\end{tabular}
\end{center}}}
		\caption{Characteristics of selected copula families. }
		\label{tab:overview_copula}
\end{table}
\begin{table}[hb]
\begin{center}
		\begin{tabular}{lc}
		\hline
		family &first partial derivative $D_1(u,v|\theta)$ \\
		\hline
		Gauss &  $\Phi\left(\frac{\Phi^{-1}(v)-\theta\Phi^{-1}(u)}{\sqrt(1-\theta^2)}\right)$\\
		Clayton & $\left(u^{-\theta}+v^{-\theta}-1\right)^{-1/\theta-1}u^{-\theta-1}$\\
		Gumbel & $u^{-1}\exp\left(-\left(\left(-\log u\right)^\theta+\left(-\log v\right)^\theta\right)^{1/\theta}\right)$\\
		Frank &  $\frac{e^{\theta}\left(e^{\theta v}-1\right)}{e^{\theta(u+1)}+e^{\theta(v+1)}-e^\theta-e^{\theta(u+v)}}$
		\end{tabular}
		\caption{First partial derivative  of selected copula families.}
		\label{tab:der_copula}
\end{center}
\end{table}
\clearpage
\section{Results of the simulation study}\label{app:figures}
We display the  results for the Gauss copula (Figure \ref{fig:gauss}), the Gumbel copula (Figure \ref{fig:gumbel}) and the Frank copula (Figure \ref{fig:frank}).
\begin{figure}[hb]
\begin{center}
\includegraphics[width=5cm]{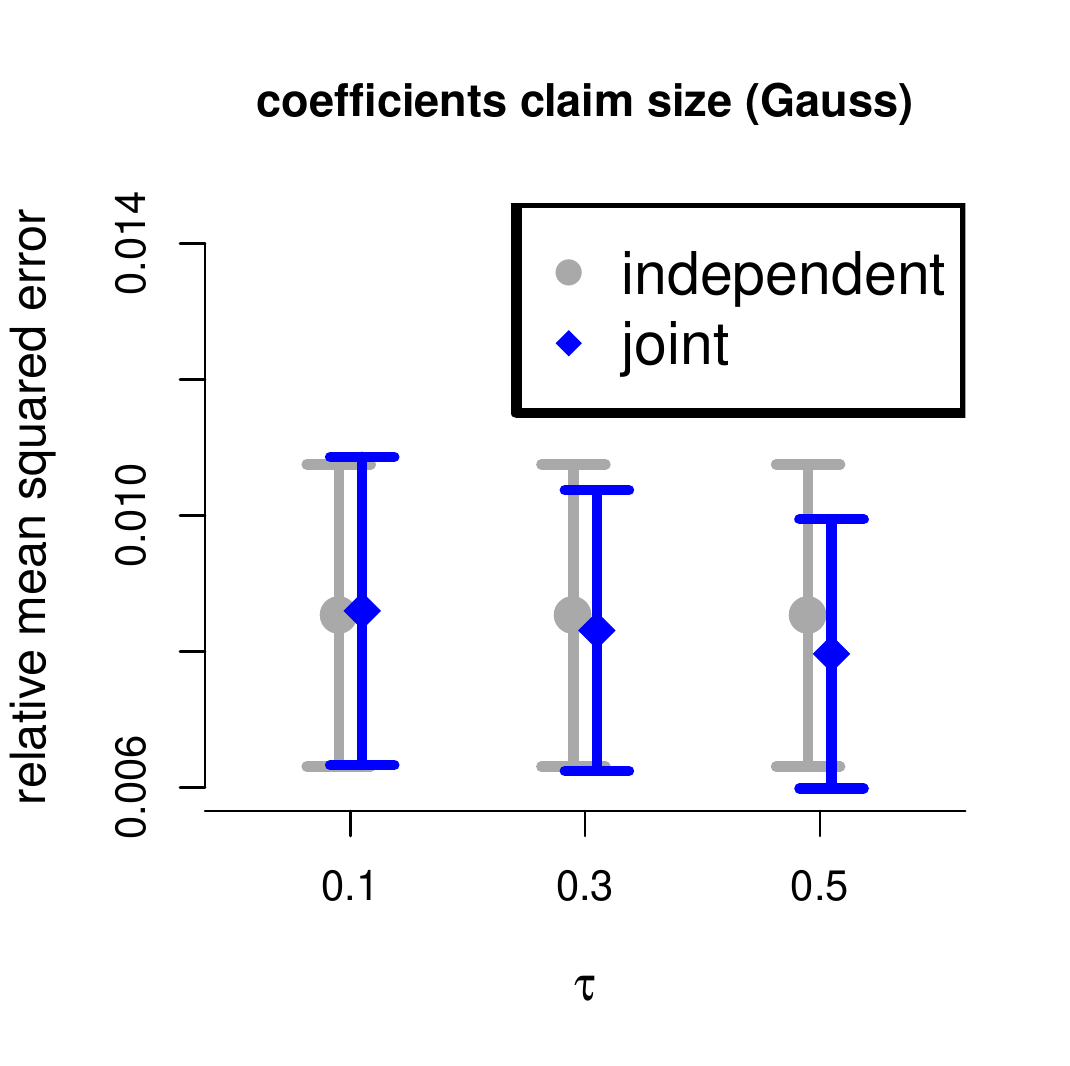}\includegraphics[width=5cm]{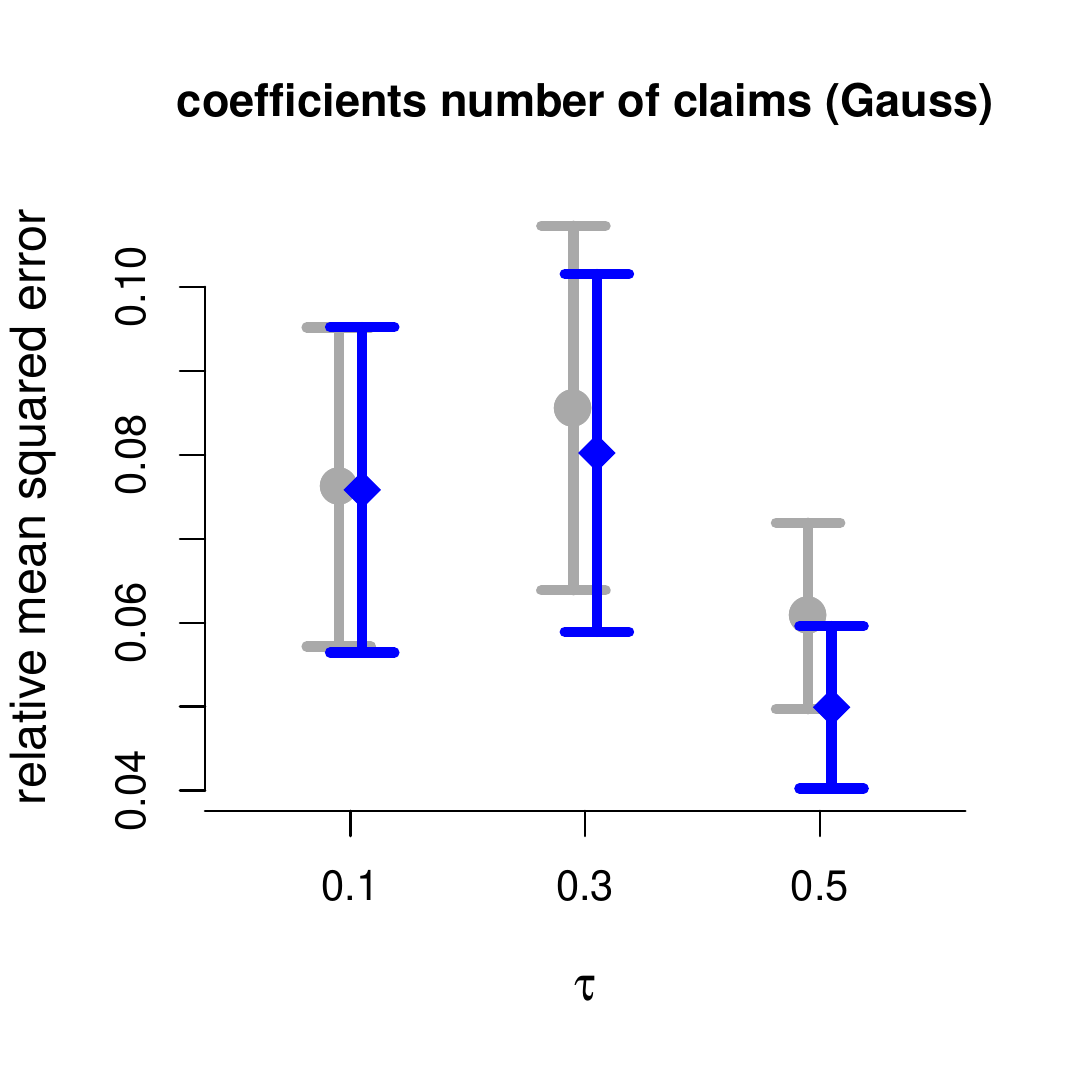}\includegraphics[width=5cm]{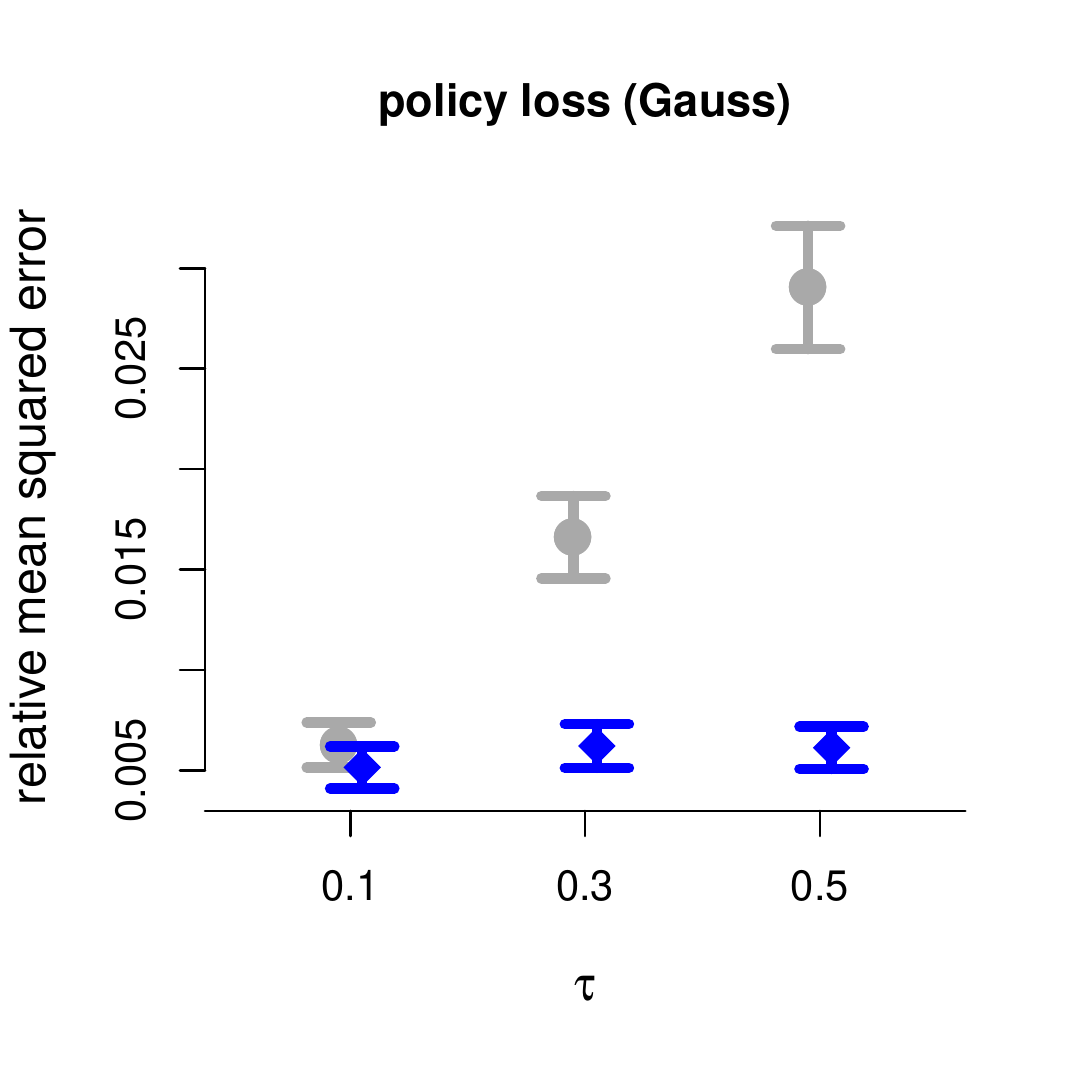}\\
\includegraphics[width=5cm]{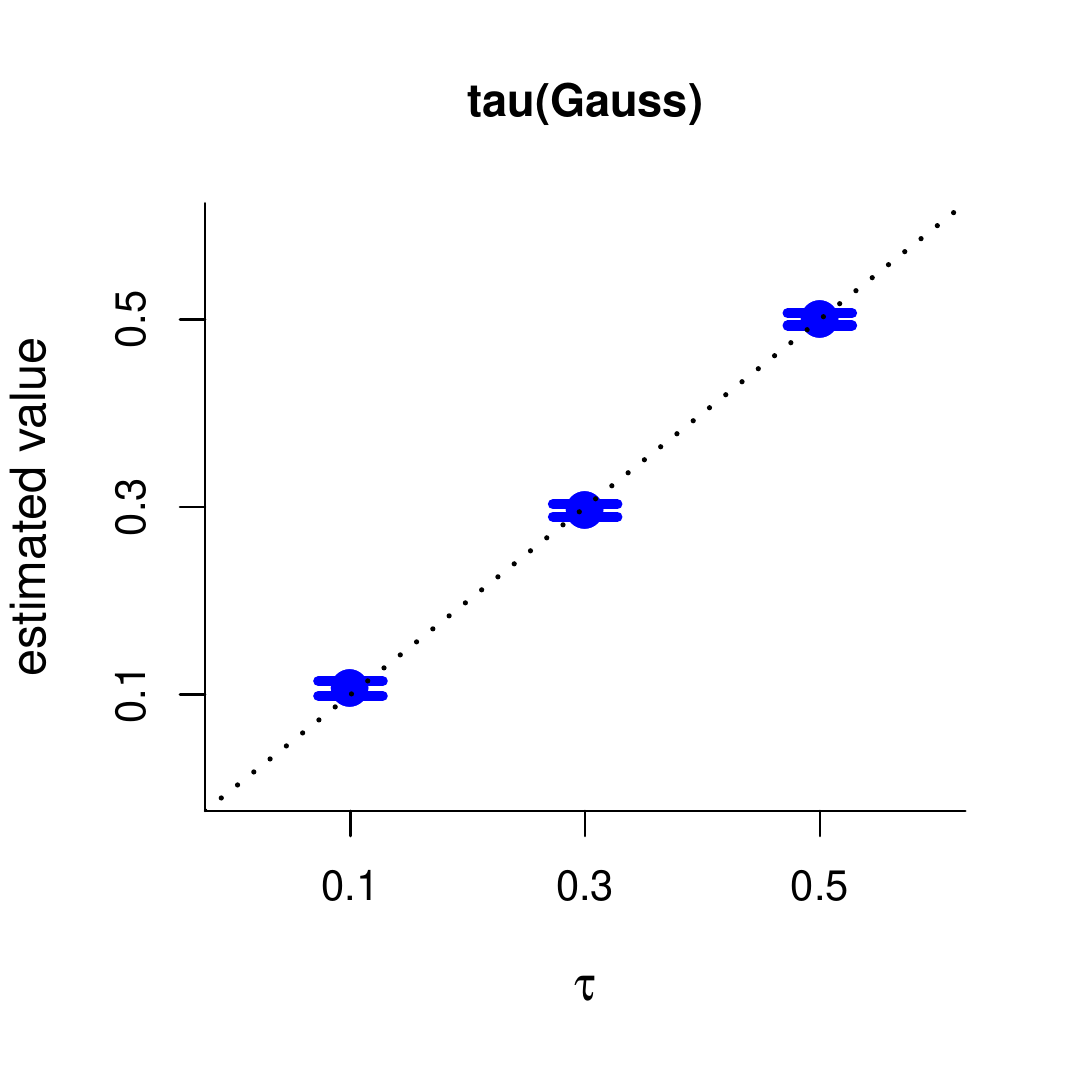}\includegraphics[width=5cm]{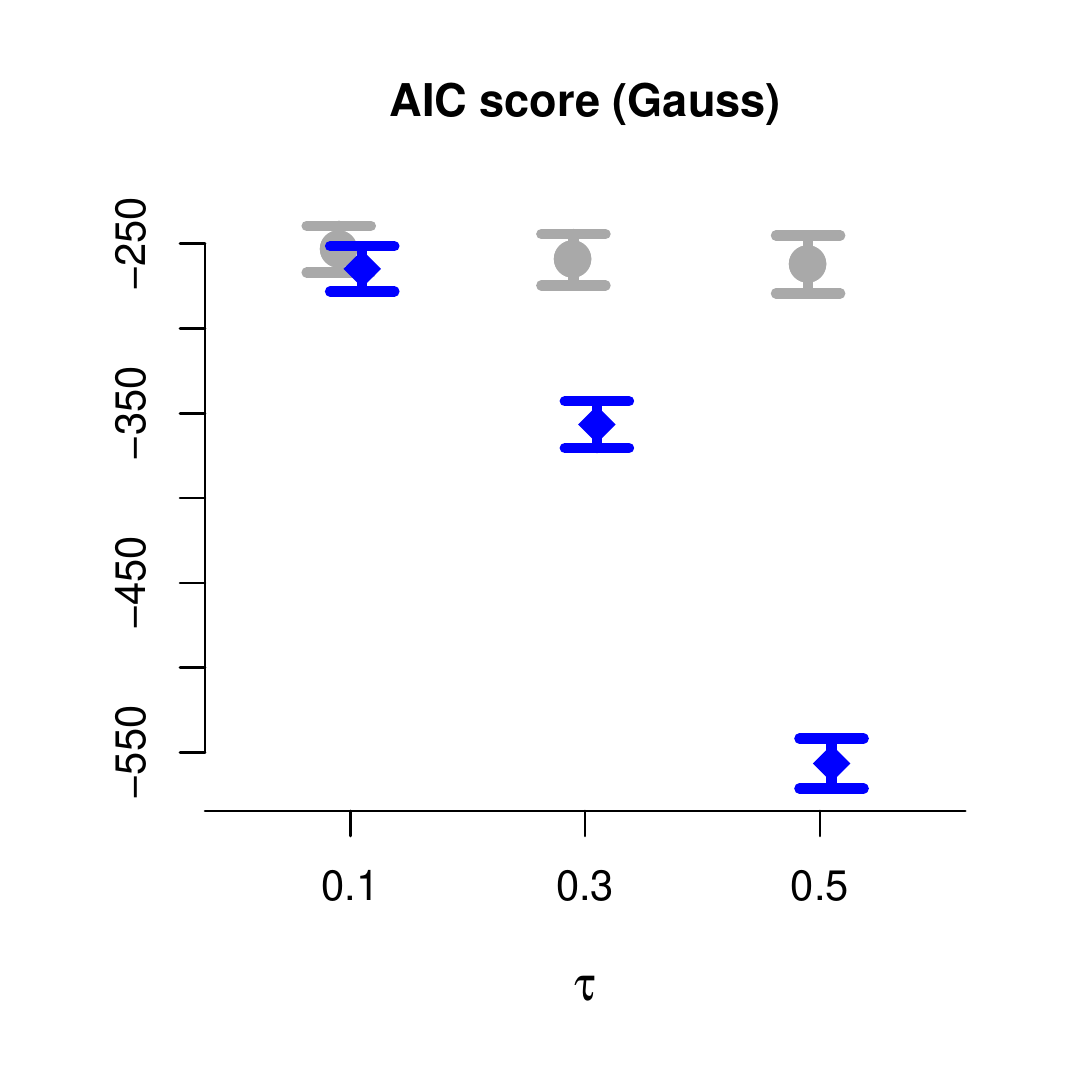}\includegraphics[width=5cm]{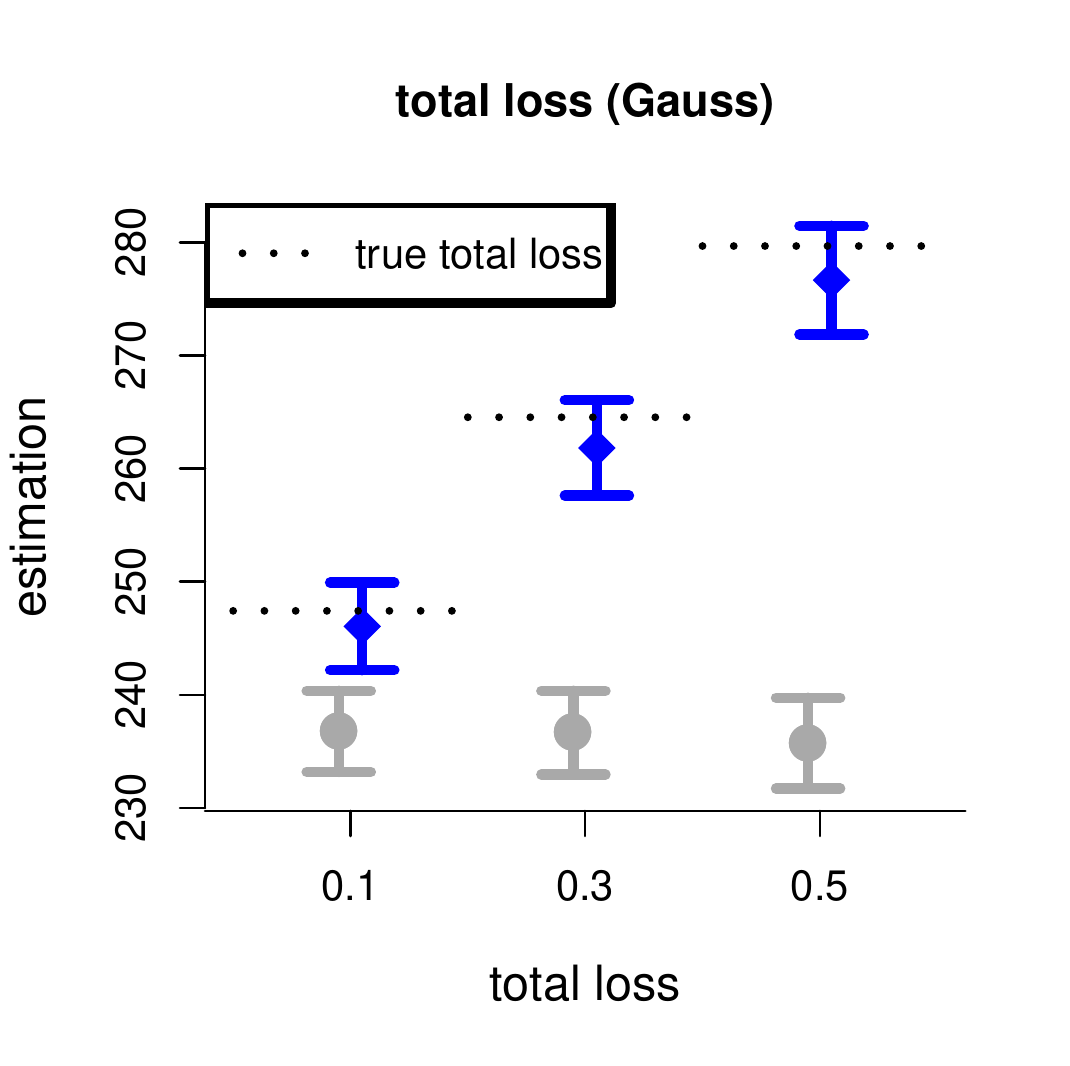}
\end{center}
\caption{Results of the simulation study for the Gauss copula. Top row: relative mean squared error \eqref{eq:relmse} for the average claim size (left), the number of claims (center) and then policy loss (right). Bottom row: estimated Kendall's $\tau$ (left), AIC score (center) and estimated total loss (right). We display the mean over $R$ runs. The width of the whiskers is twice the estimated standard deviation of the mean.  Whiskers that are not displayed are too narrow to be visualized.}
\label{fig:gauss}%
\end{figure}
\begin{figure}[t]
\begin{center}
\includegraphics[width=5cm]{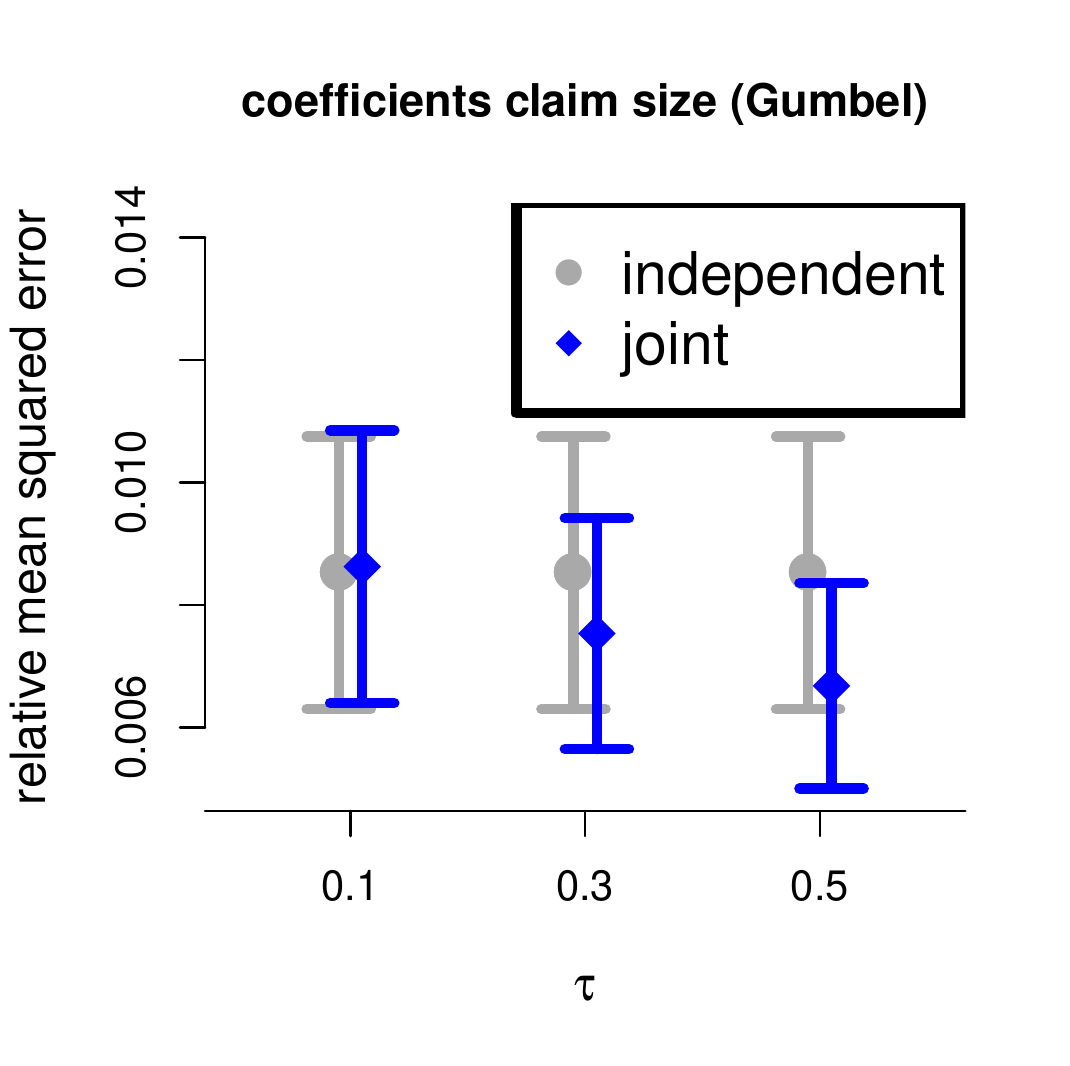}\includegraphics[width=5cm]{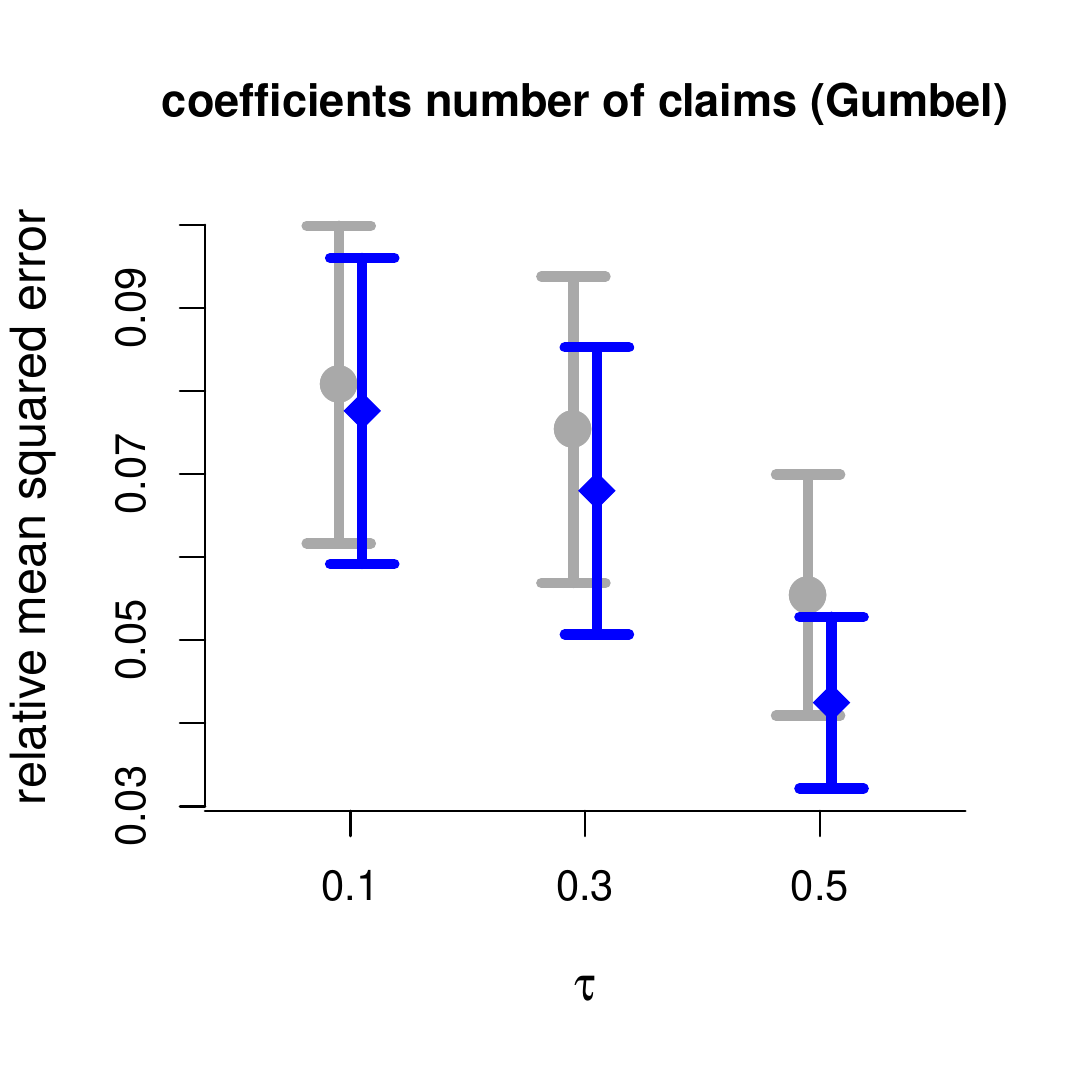}\includegraphics[width=5cm]{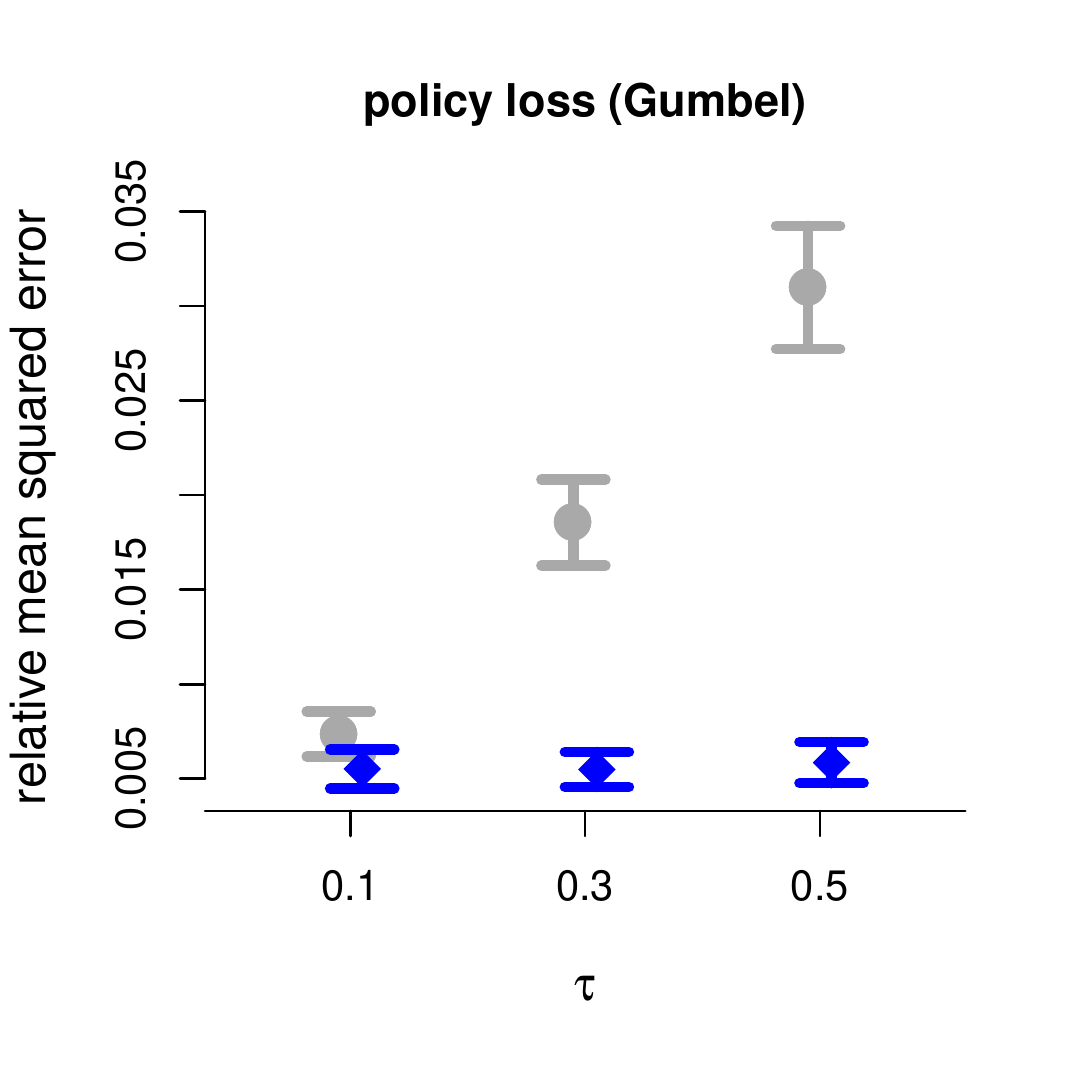}\\
\includegraphics[width=5cm]{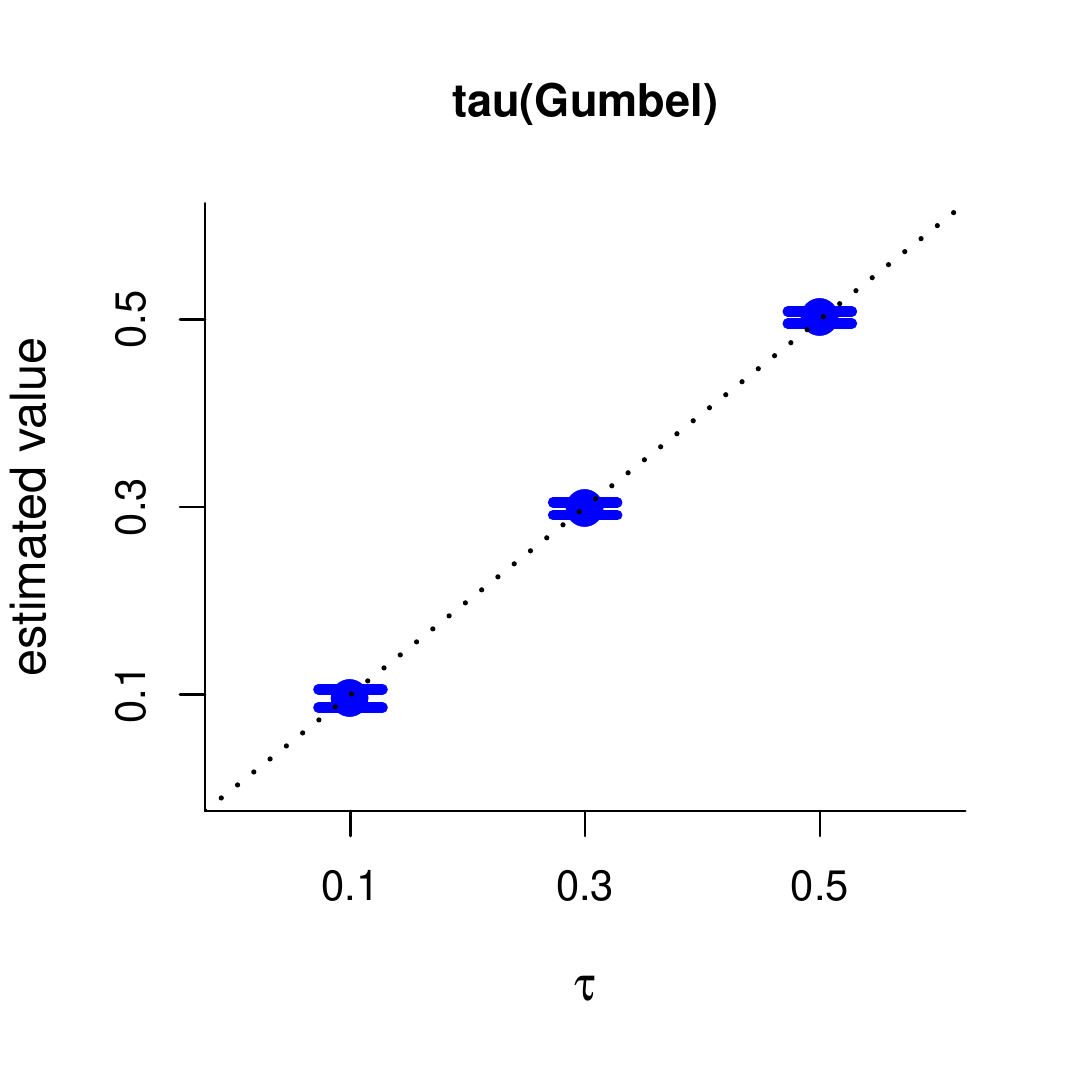}\includegraphics[width=5cm]{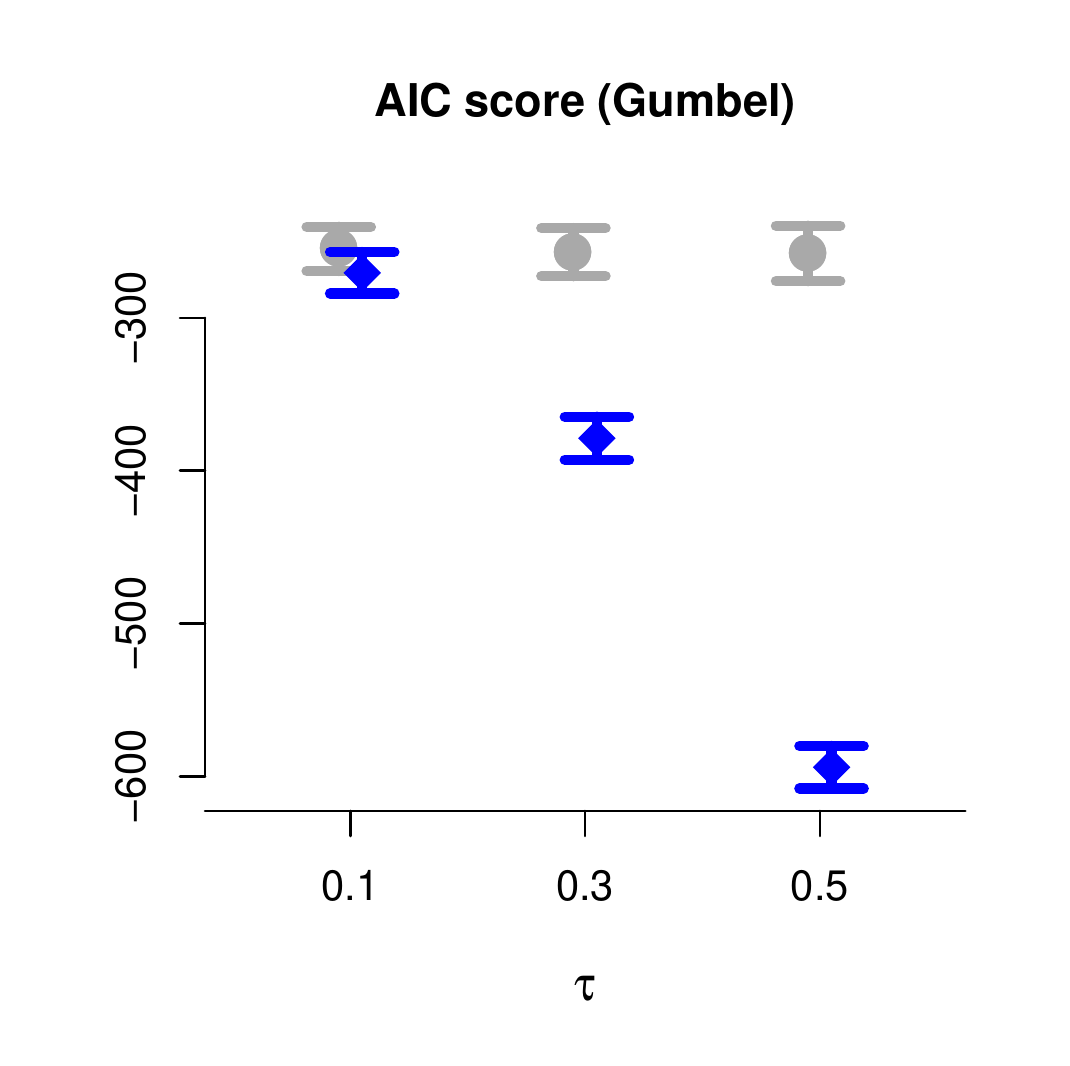}\includegraphics[width=5cm]{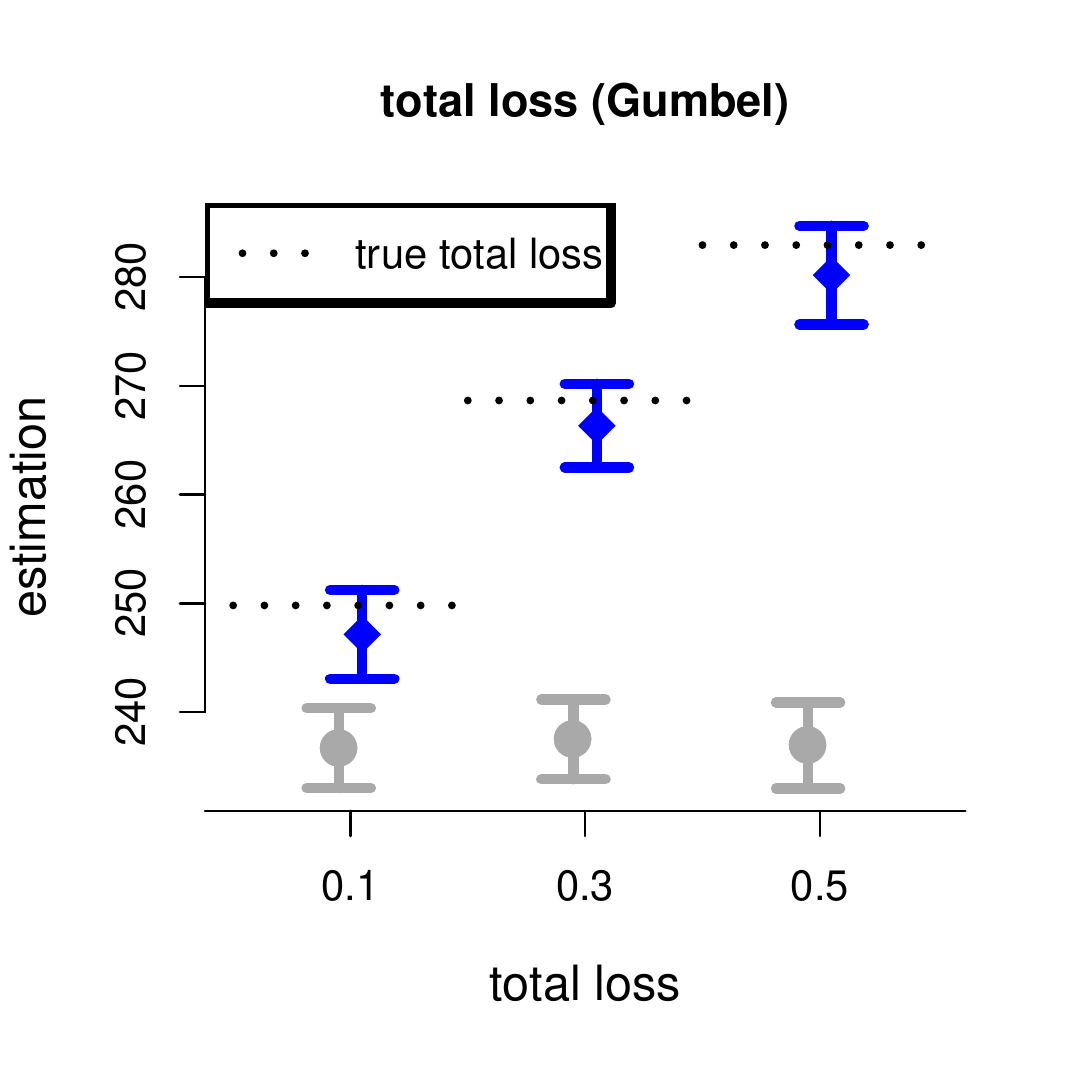}

\end{center}
\caption{Results of the simulation study for the Gumbel copula. Top row: relative mean squared error \eqref{eq:relmse} for the average claim size (left), the number of claims (center) and then policy loss (right). Bottom row: estimated Kendall's $\tau$ (left), AIC score (center) and estimated total loss (right). We display the mean over $R$ runs. The width of the whiskers is twice the estimated standard deviation of the mean.  Whiskers that are not displayed are too narrow to be visualized.}
\label{fig:gumbel}%
\end{figure}

\begin{figure}%
\begin{center}
\includegraphics[width=5cm]{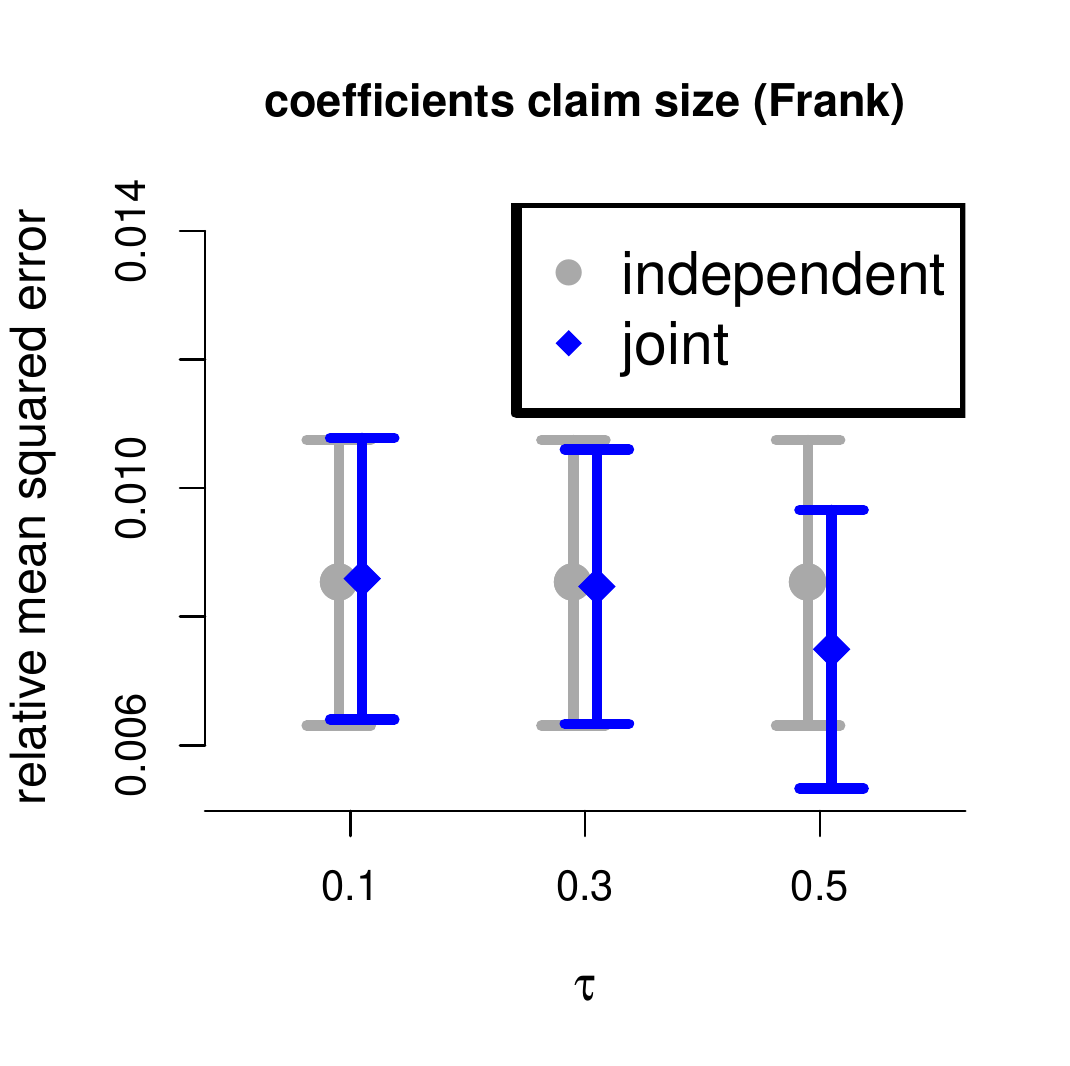}\includegraphics[width=5cm]{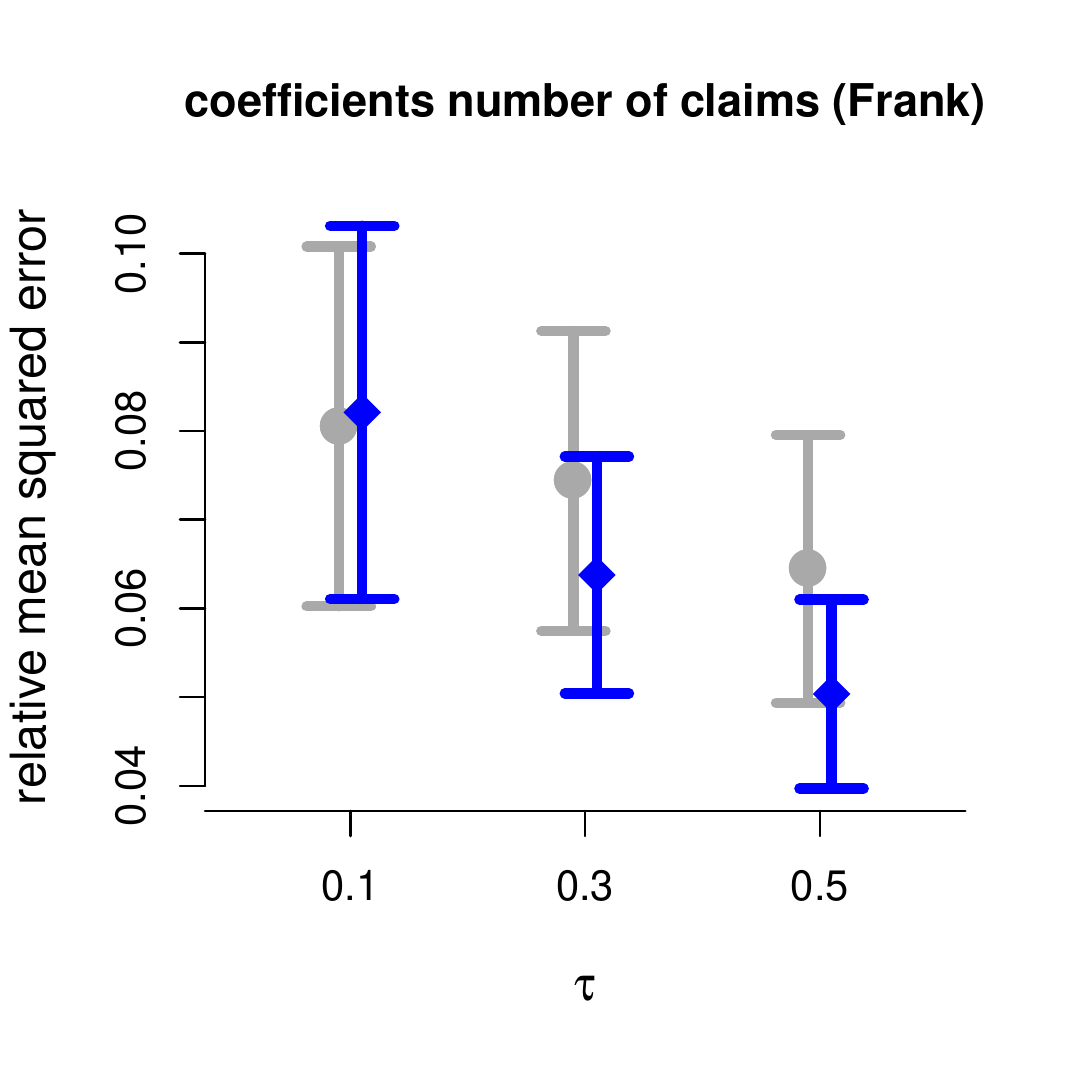}\includegraphics[width=5cm]{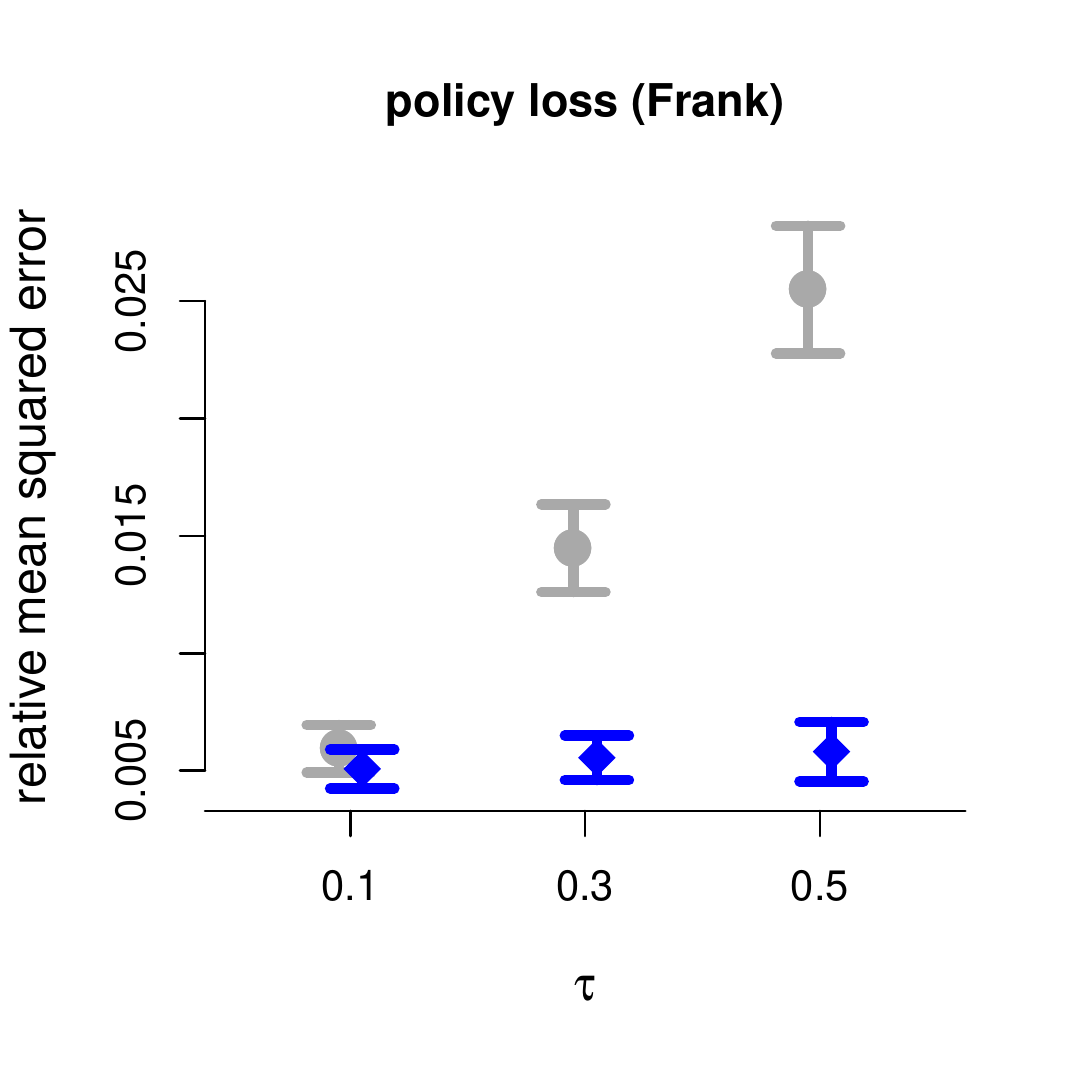}\\
\includegraphics[width=5cm]{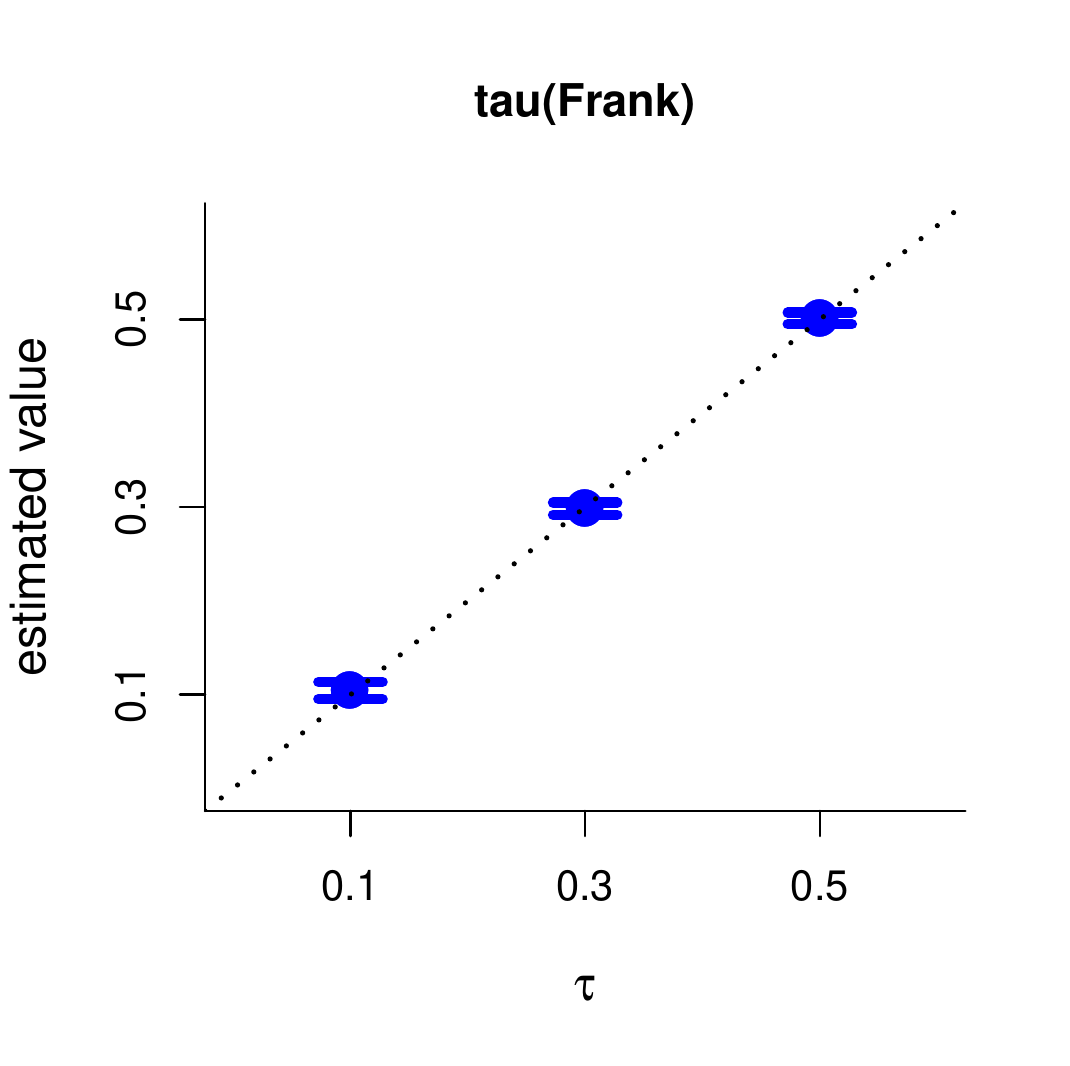}\includegraphics[width=5cm]{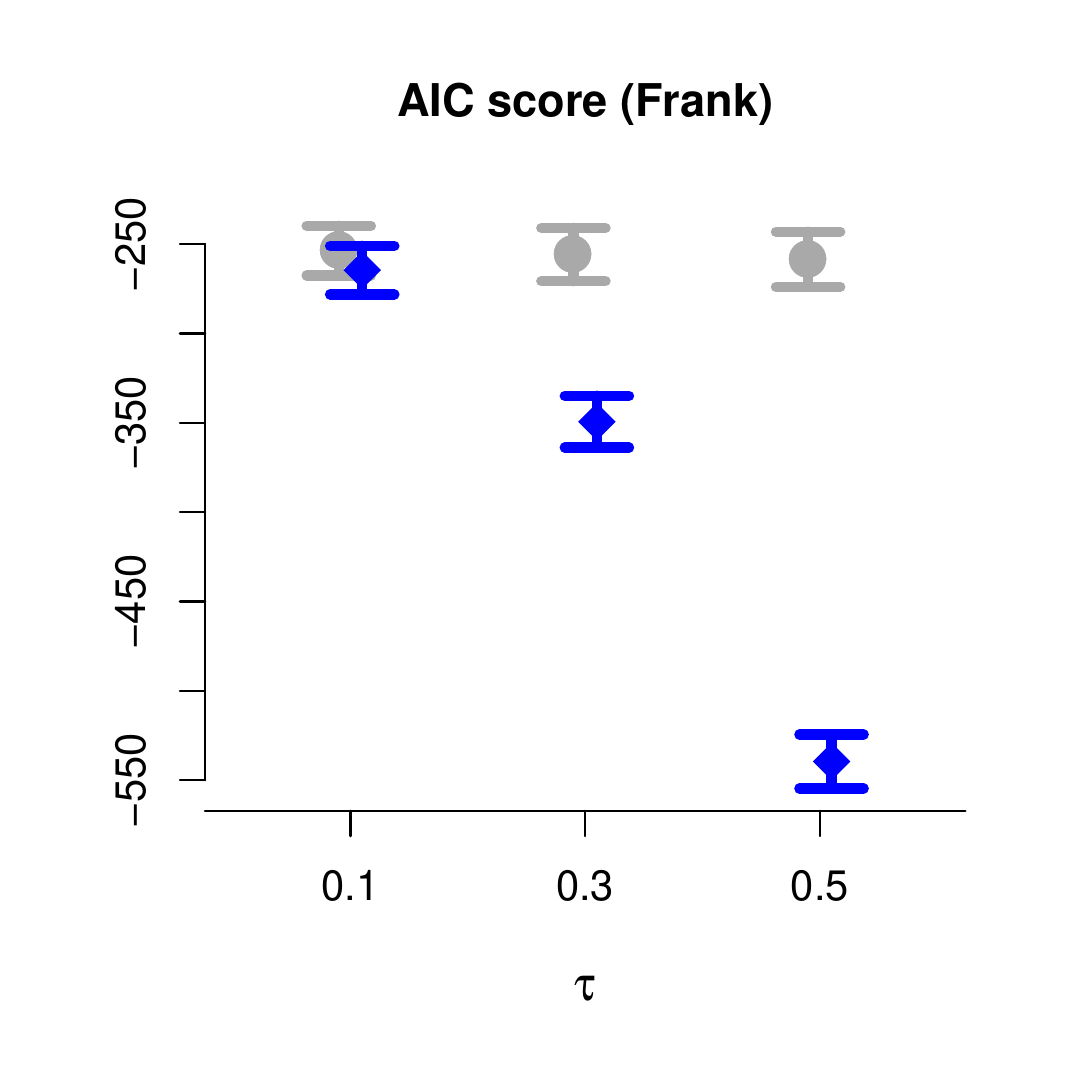}\includegraphics[width=5cm]{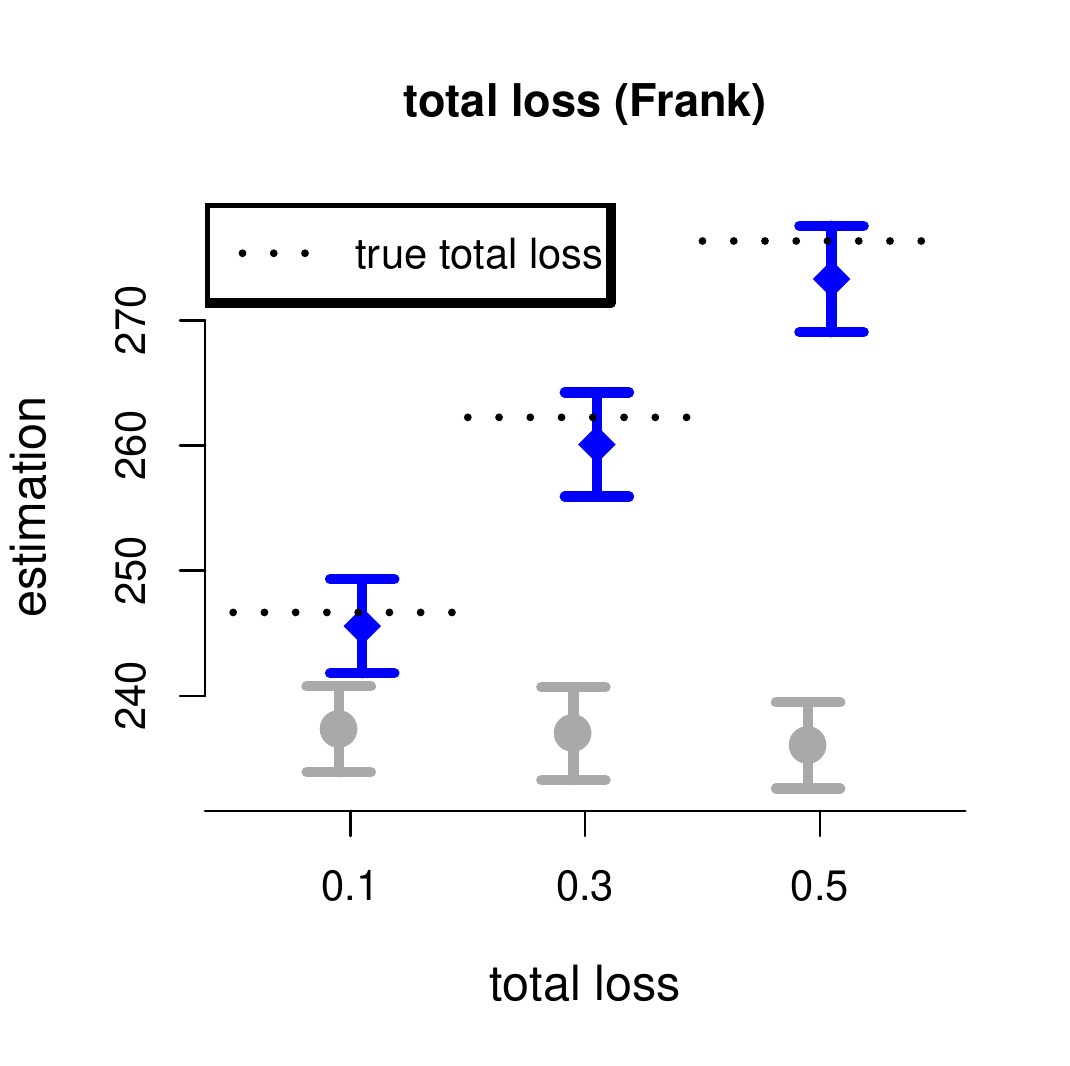}
\end{center}
\caption{Results of the simulation study for the Frank copula.Top row: relative mean squared error \eqref{eq:relmse} for the average claim size (left), the number of claims (center) and then policy loss (right). Bottom row: estimated Kendall's $\tau$ (left), AIC score (center) and estimated total loss (right). We display the mean over $R$ runs. The width of the whiskers is twice the estimated standard deviation of the mean.  Whiskers that are not displayed are too narrow to be visualized.}
\label{fig:frank}%
\end{figure}







\end{document}